\documentclass{amsart}
\usepackage{amsthm, amssymb, amsmath, pb-diagram}

\input xy
\xyoption{all}

\newcommand{\quash}[1]{}  

\DeclareMathOperator{\SL}{SL}
\DeclareMathOperator{\Hom}{Hom}

\DeclareMathOperator{\Sym}{Sym}
\DeclareMathOperator{\Spec}{Spec}

\DeclareMathOperator{\id}{id}

\DeclareMathOperator{\Pic}{Pic}
\DeclareMathOperator{\ch}{\mathfrak{c}}
\DeclareMathOperator{\Span}{Span}
\DeclareMathOperator{\Rig}{Rig}

\def\AA{\mathbb{A}}
\def\BB{\mathbb{B}}
\def\CC{\mathbb{C}}
\def\EE{\mathbb{E}}

\def\GG{\mathbb{G}}

\def\PP{\mathbb{P}}
\def\QQ{\mathbb{Q}}
\def\RR{\mathbb{R}}
\def\ZZ{\mathbb{Z}}

\def\frakg{\mathfrak{g}}
\def\frakt{\mathfrak{t}}
\def\frakh{\mathfrak{h}}

\def\calO{\mathcal{O}}

\def\calL{\mathcal{L}}

\def\Grass{\mathcal{G}r}
\def\Flag{\mathcal{F}\ell}

\def\tilT{\widetilde{T}}
\def\tilt{\widetilde{\frakt}}
\def\tilw{\widetilde{w}}
\def\tilW{\widetilde{W}}

\swapnumbers
\theoremstyle{plain}
\newtheorem{theorem}[subsection]{Theorem}
\newtheorem{lemma}[subsection]{Lemma}
\newtheorem{cor}[subsection]{Corollary}
\newtheorem{prop}[subsection]{Proposition}

\theoremstyle{definition}

\theoremstyle{remark}
\newtheorem*{remark}{Remark}

\numberwithin{equation}{subsection}

\title{Goresky-MacPherson Calculus\\ for the Affine Flag Varieties}
\author{Zhiwei Yun}
\address{Princeton University}
\email{zyun@math.princeton.edu}
\date{October 2007}
\subjclass[2000]{Primary 14L30; Secondary 55N91}

\begin{document}

\begin{abstract}
We use the fixed point arrangement technique developed by Goresky-MacPherson in \cite{GM} to calculate the part of the equivariant cohomology of the affine flag varieties generated by degree 2. This turns out to be a quadric cone. We also describe the spectrum of the full equivariant cohomology ring as an explicit geometric object. We use our results to show that the vertices of the moment map images of the affine flag varieties lie on a paraboloid.
\end{abstract}

\maketitle

\section{Introduction}

\subsection{Goresky-MacPherson calculus} In their paper \cite{GM}, Goresky and MacPherson consider a complex algebraic torus $T$ acting on a complex projective variety $X$. Let $\widehat{H}^*_T(X)$ be the part of the equivariant cohomology $H^*_T(X)$ generated by degree 0 and degree 2. The affine scheme $\Spec \widehat{H}^*_T(X)$ is naturally a closed subscheme of $H_2^T(X)$. If $T$ acts with finitely many fixed points and $X$ is {\em equivariantly formal} (see \cite{GKM}, section 1.2), Goresky and MacPherson show that $\Spec H^*_T(X)$ is reduced and consists of an arrangement of linear subspaces $V_x\subset H^T_2(X)$, one for each fixed point $x$. Here $V_x$ is the image of the natural map induced by the inclusion $i_x:\{x\}\hookrightarrow X$:
\begin{equation}\label{eq:definesection}
\mu_x=i_{x,*}: H^T_2(\{x\})\cong \frakt \to H^T_2(X)
\end{equation}
where $\frakt$ is the Lie algebra of $T$. We call $V=V(T\curvearrowright X)$ the {\em fixed point arrangement} of the action $T$ on $X$.

\subsection{Main results} In this note we extend this result to certain ind-schemes $X$. In examples such as the affine flag varieties, $H^T_2(X)$ is still finite dimensional but there are infinitely many (still discrete) fixed points. The fixed point arrangement $V\subset H^T_2(X)$ still makes sense in this situation. In theorem \ref{th:closure}, we show that the subscheme $\Spec H^*_T(X)$ of $H^T_2(X)$ is the Zariski closure of $V$. We then calculate explicitly the fixed point arrangement of the $\tilT$-action on $\Flag_G$. Here $\tilT$, the {\em augmented torus}, is the product of a maximal torus $T$ of $G$ and the loop-rotation torus $\GG^{rot}_m$. We show in theorem \ref{th:cone} that $\Spec\widehat{H}^*_{\tilT}(\Flag_G)$ is a quadric cone in $H^{\tilT}_2(\Flag_G)$ cut out by the one-dimensional kernel of the cup-product
\begin{equation}\label{eq:cup}
\cup: \Sym^2(H^2_{\tilT}(\Flag_G))\to H^4_{\tilT}(\Flag_G).
\end{equation}

In proposition \ref{p:ultrah2}, we describe $H^{\tilT}_2(\Flag_G)$ in more intrinsic terms using Kac-Moody algebras. We show that, in analogy with the finite flag varieties, the fixed point arrangement are essentially the union of graphs of affine Weyl group action on the affine Cartan algebra.

\subsection{Other results and applications} We will also describe the full equivariant cohomology ring $H^*_{\tilT}(\Flag_G)$. It is convenient to view this equivariant cohomology as the $\GG_m^{rot}$-equivariant cohomology of the stack $[I\backslash G(F)/I]$. The convolution structure realizes $\Spec H^*_{\GG_m^{rot}([I\backslash G(F)/I])}$ as a groupoid over $\AA^1\times\frakt$. We will show in theorem \ref{th:cohoFlag} that there is a natural isomorphism of groupoids over $\AA^1\times\frakt$:
\begin{equation*}
\Spec H^*_{\GG_m^{rot}}([I\backslash G(F)/I])\cong N(\frakt\times_{\ch}\frakt,\frakt\times\frakt) 
\end{equation*}
where $N(\frakt\times_{\ch}\frakt,\frakt\times\frakt)$ is the total space of the deformation to the normal cone of the fiber product $\frakt\times_{\ch}\frakt$ inside the (absolute) product $\frakt\times\frakt$, which carries a natural groupoid structure over $\AA^1\times\frakt$.

One application is to prove the folklore theorem (corollary \ref{c:paraboloid}) saying that the moment map image of the $\tilT$-fixed points of $\Flag_G$ lie on a paraboloid in $\tilt_\RR^*$. A similar statement for the affine Grassmannian was proved by Atiyah and Pressley, see \cite{AP}.

The results here have some overlaps with the results of Kostant-Kumar in \cite{KK}. However, we approach the problem from a different perspective.

\subsection{Notations and conventions} Throughout this note, all (ind-)schemes are over $\CC$. All (co)homologies are taken with $\CC$-coefficients unless otherwise specified (although most results are true for $\QQ$-coefficients).

For an algebraic torus $T$, we denote its character lattice by $X^\bullet(T)$ and cocharacter lattice by $X_\bullet(T)$.

For a graded algebra $H^*$, $\widehat{H}^*$ denotes the subalgebra of $H^*$ generated by degree 0 and 2. 

For an algebraic group $G$, $\BB G$ denotes the classifying stack of $G$ and $\EE G$ denotes the universal $G$-torsor over $\BB G$. 

\subsection{Acknowledgment} This work owes a great deal to Bob MacPherson. The very idea of considering the Zariski closure of the fixed point arrangement in the case of infinitely many fixed points is due to him. The author is grateful to Mark Goresky for extremely helpful suggestions on the presentation of the note. The author also thanks Roman Bezrukavnikov, David Treumann and Xinwen Zhu for stimulating discussions.

\section{Equivariant cohomology of ind-schemes}\label{s:closure}

\subsection{Assumptions on the space and the action}\label{ss:assumptions}

Let $X=\bigcup X_n$ be a strict ind-projective scheme, i.e., each $\iota_n: X_n\to X_{n+1}$ is a closed embedding of projective schemes. Let $T$ be a torus acting algebraically on $X$. We make the following assumptions:
\begin{enumerate}
 \item Assume each $X_n$ is stable under $T$ and $T$-equivariantly formal;
 \item Assume each $X_n$ has only finitely many fixed points;
 \item Assume that $H_2(X)$ is finite dimensional. 
\end{enumerate}

Following the idea of R.MacPherson, we extend (part of) the main theorem of \cite{GM} to the ind-scheme case.


\begin{theorem}\label{th:closure}
Suppose the $T$-action on a connected ind-projective scheme $X$ satisfies the assumptions in section \ref{ss:assumptions}. Then $\Spec \widehat{H}^*_T(X)$ is the Zariski closure of the fixed point arrangement $V$ inside the affine space $H^T_2(X)$ with the reduced scheme structure. 
\end{theorem}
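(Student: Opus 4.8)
The plan is to realize both $\Spec\widehat H^*_T(X)$ and $\overline V$ as closed subschemes of the \emph{same} finite-dimensional affine space $H^T_2(X)$, cut out by one and the same radical ideal, by pushing everything into the ambient ring $\prod_{x\in X^T}\Sym(\frakt^*)$ via restriction to the fixed points. Two inputs drive the argument. First, that the cohomology of the ind-scheme computes the inverse limit: $H^*_T(X)\cong\varprojlim_n H^*_T(X_n)$. Second, the localization theorem at each finite stage: for the $T$-equivariantly formal projective scheme $X_n$ with finitely many fixed points, restriction gives an injection $H^*_T(X_n)\hookrightarrow H^*_T(X_n^T)=\bigoplus_{x\in X_n^T}\Sym(\frakt^*)$.

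For the first input I would use the Milnor exact sequence $0\to\varprojlim^{1}_n H^{k-1}_T(X_n)\to H^k_T(X)\to\varprojlim_n H^k_T(X_n)\to 0$ and observe that the $\varprojlim^{1}$ term vanishes: since each $X_n$ is projective, the Serre spectral sequence of $X_n\to\EE T\times_T X_n\to\BB T$ has finite-dimensional $E_2$-page in each total degree, so every $H^k_T(X_n)$ is a finite-dimensional $\CC$-vector space, and an inverse system of finite-dimensional vector spaces automatically satisfies the Mittag--Leffler condition. For the second input, equivariant formality means $H^*_T(X_n)$ is free, hence torsion-free, over $R=\Sym(\frakt^*)=H^*_T(\mathrm{pt})$, while the localization theorem (using assumption (2) and projectivity) makes the kernel of restriction to $H^*_T(X_n^T)$ an $R$-torsion submodule of a torsion-free module, hence zero. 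Passing to $\varprojlim_n$, which is left exact and turns the direct sums over the nested finite sets $X_n^T$ into the product $\prod_{x\in X^T}\Sym(\frakt^*)$, yields an injection $H^*_T(X)\hookrightarrow\prod_{x\in X^T}\Sym(\frakt^*)$ whose $x$-component is the restriction $i_x^*$ to the fixed point $x$.

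Now restrict attention to the subalgebra generated in degrees $0$ and $2$. Since $X$ is connected, $\widehat H^*_T(X)$ is a quotient of $\Sym(H^2_T(X))$; and $H^2_T(X)=\varprojlim_n H^2_T(X_n)=\varprojlim_n (H^T_2(X_n))^*=(H^T_2(X))^*$ is finite-dimensional by assumption (3) together with equivariant formality, so $\Spec\widehat H^*_T(X)$ is a closed subscheme of the affine space $\Spec\Sym(H^2_T(X))=H^T_2(X)$, namely the zero locus of the ideal $I=\ker\bigl(\Sym(H^2_T(X))\twoheadrightarrow\widehat H^*_T(X)\bigr)$. Composing this surjection with the injection $\widehat H^*_T(X)\hookrightarrow\prod_{x\in X^T}\Sym(\frakt^*)$, which being injective leaves the kernel unchanged, realizes $I$ as the kernel of the composite $\Sym(H^2_T(X))\to\prod_{x}\Sym(\frakt^*)$. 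This composite is a $\CC$-algebra map, hence determined by its restriction to $H^2_T(X)$, where its $x$-component is $i_x^*\colon H^2_T(X)\to H^2_T(\{x\})=\frakt^*$; by adjointness of pullback and pushforward, $i_x^*$ in degree $2$ is the transpose of $\mu_x=i_{x,*}\colon\frakt\to H^T_2(X)$, so on $\Sym(H^2_T(X))$ the $x$-component of the composite is pullback of polynomial functions along $\mu_x$, whose kernel is the ideal $I(V_x)$ of functions vanishing on the linear subspace $V_x=\mu_x(\frakt)$. Hence $I=\bigcap_{x\in X^T}I(V_x)=I\bigl(\bigcup_{x\in X^T}V_x\bigr)=I(V)=I(\overline V)$, which is radical with zero locus $\overline V$. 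This identifies $\Spec\widehat H^*_T(X)$ with $\overline V$ carrying its reduced scheme structure, as claimed.

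The step I expect to demand the most care is the vanishing of $\varprojlim^{1}_n H^{k-1}_T(X_n)$: this is precisely where ind-\emph{projectivity} is used, and without it the cohomology of $X$ need not be the inverse limit, so the composite map to $\prod_x\Sym(\frakt^*)$ could acquire a kernel from the $\varprojlim^{1}$ term and the ideal computation above would collapse. The other point to check carefully is the compatibility of the identification $\Spec\Sym(H^2_T(X))=H^T_2(X)$ with the maps $\mu_x$ --- equivalently, that $i_x^*$ in degree $2$ is dual to $\mu_x$ --- which is the adjointness input used above; everything else is formal.
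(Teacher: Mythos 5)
Your proposal is correct and follows essentially the same route as the paper: localization on each equivariantly formal finite stage $X_n$, passage to the limit to embed $H^*_T(X)$ into the fixed-point restrictions, and identification of $\Spec\widehat{H}^*_T(X)$ via the factorization $\Sym(H^2_T(X))\twoheadrightarrow\widehat{H}^*_T(X)\hookrightarrow\prod_x\Sym(\frakt^*)$, whose kernel is the radical ideal $I(\overline{V})$. Your Milnor-sequence/Mittag--Leffler justification of $H^*_T(X)\cong\varprojlim_n H^*_T(X_n)$ makes explicit a step the paper leaves implicit (``$i_n^*(c)=0$ for all $n$ implies $c=0$''), but the argument is otherwise the same.
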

\begin{proof}
Let $i_n:X_n\to X$ be the inclusion map. For each $n$ consider the natural commutative diagram
\[
\xymatrix{H^*_T(X) \ar[r]^(.35){\rho*}\ar[d]^{i_n^*} & \bigoplus_{x\in X^T}H^*_T(\{x\})\ar[d]\\
          H^*_T(X_n)\ar[r]^(.35){\rho_n^*} & \bigoplus_{x\in X_n^T}H^*_T(\{x\})}
\]
Here the arrows are the obvious restriction maps. By assumption (1) of section \ref{ss:assumptions},  $X_n$ is equivariantly formal so we can apply the localization theorem (see \cite{GKM},Theorem 1.2.2) to conclude that $\rho_n^*$ is injective. This implies that $\rho^*$ is also injective. In fact, for any class $c\in H^*_T(X)$, if $\rho^*(c)=0$, then for all$n$ large enough, $\rho_n^*(i_n^*c)=0$, hence $i_n^*(c)=0$. This implies $c=0$. In particular, $H^*_T(X)$, hence $\widehat{H}^*_T(X)$, are reduced.

Now we have a surjection followed by an injection:
\begin{equation}\label{eq:rings}
\Sym(H^2_T(X))\twoheadrightarrow \widehat{H}^*_T(X) \stackrel{\rho^*}{\hookrightarrow} \oplus_{x\in X^T}H^*_T(\{x\}).
\end{equation}
Taking the spectra of each term of (\ref{eq:rings}), we get
\begin{equation*}
\coprod_{x\in X^T}\frakt\stackrel{\coprod\mu_x}{\longrightarrow}\Spec\widehat{H}^*_T(X)\hookrightarrow H^T_2(X).
\end{equation*}
where the first arrow is dominant and the second arrow is a closed embedding. This shows that the support of $\Spec\widehat{H}^*_T(X)$ in $H^T_2(X)$ is the Zariski closure of $V$. 
\end{proof}


\section{The fixed point arrangement of affine flag varieties}\label{s:flagarr}

\subsection{Notations concerning the group $G$}\label{ss:defineflag} Let $G$ be a simple and simply-connected group over $\CC$. Let $r$ be the rank of G. Let $F=\CC((z))$ and $\calO=\CC[[z]]$. Choose a maximal torus $T$ and a Borel subgroup $B$ containing $T$. Let $I\subset G(\calO)$ be the Iwahori subgroup corresponding to $B$. The \textit{affine Grassmannian} is $\Grass_G=G(F)/G(\calO)$. The \textit{affine flag variety} is $\Flag_G=G(F)/I$. These are ind-schemes over $\CC$ and we have a natural projection
\begin{equation}\label{eq:fltogr}
p_G: \Flag_G\to\Grass_G
\end{equation}
whose fiber over the base point of $\Grass_G$ is identified with the finite flag variety $f\ell_G=G/B$. Basic facts about affine flag varieties and affine Grassmannian can be found in \cite{GKMFL}, section 5 and 14.

Let $N_G(T)$ be the normalizer of $T$ in $G$ and $W=N_G(T)/T$ be the Weyl group. Let $\Phi$ be the root system determined by $(G,T,B)$. Let $\theta\in\Phi$ be the highest root. Fix a Killing form $(-|-)$ on $\frakt^*$ such that $(\theta|\theta)=2$. It induces an isomorphism
\begin{equation}\label{eq:definesigma}
\sigma: \frakt\stackrel{\sim}{\to}\frakt^*
\end{equation}
such that $(\xi^*|\sigma(\xi))=\langle \xi^*,\xi\rangle$ for all $\xi\in \frakt$ and $\xi^*\in \frakt^*$. In particular, let $\check{\theta}$ denote the coroot corresponding to $\theta$, we have $\sigma(\check\theta)=\theta$. Via $\sigma$, we also have a Killing form $(-|-)$ on $\frakt$. We use $|-|^2$ to denote the quadratic forms on $\frakt^*$ or $\frakt$ associated to the Killing forms.

\subsection{Notations concerning the affine Kac-Moody algebra}\label{ss:kacmoody} Consider the affine Kac-Moody algebra $L(\frakg)$ associated to $\frakg$. We will follow the notations of \cite{Kac} (chapter 6) which we briefly recall here. The affine Cartan algebra $\frakh$ and its dual $\frakh^*$ have decompositions
\begin{equation}\label{eq:decomp}
\frakh=\CC d\oplus\frakt\oplus\CC K,\frakh^*=\CC\delta\oplus\frakt^*\oplus\CC\Lambda_0
\end{equation} 
where $K$ is the {\em canonical central element}, $d$ the {\em scaling element} and $\delta$ the positive generator of the imaginary roots. The pairing between them extends the natural pairing between $\frakt$ and $\frakt^*$ and satisfies
$$\langle \CC d\oplus\CC K,\frakt^*\rangle=0,\langle\frakt,\CC\delta\oplus\CC\Lambda_0\rangle=0,$$
$$\langle d,\Lambda_0\rangle=\langle K,\delta\rangle=0,\langle d,\delta\rangle=\langle K,\Lambda_0\rangle=1.$$

We extend the Killing form $(-|-)$ to $\frakh$ and $\frakh^*$ by:
$$(K|\frakt\oplus\CC K)=0,(d|\frakt\oplus\CC d)=0,(K|d)=1$$
$$(\delta|\frakt^*\oplus\CC\delta)=0,(\Lambda_0|\frakt^*\oplus\CC\Lambda_0)=0,(\delta|\Lambda_0)=1.$$

Let $\widetilde{\Phi}$ be the affine root system of $L(\frakg)$ with simple roots $\{\alpha_0=\delta-\theta,\alpha_1,\cdots,\alpha_r\}$ and simple coroots $\{\check{\alpha}_0=K-\check{\theta},\check\alpha_1,\cdots,\check\alpha_r\}$. Let $\tilW =X_\bullet(T)\rtimes W$ be the affine Weyl group associated to $L(\frakg)$ with simple reflections $\{s_0,\cdots,s_r\}$ corresponding to the simple roots of $\widetilde{\Phi}$. Let $\ell$ be the length function on $\tilW $ with respect to these simple reflections. 

The group $\tilW $ acts on $\frakh$ via the formula
\begin{equation}\label{eq:weylaction}
\tilw(u,\xi,v)=\left(u,w\xi+u\lambda,v-(w\xi|\lambda)-\frac{u}{2}|\lambda|^2\right).
\end{equation}
where $(u,\xi,v)$ are the coordinate in terms of the decomposition (\ref{eq:decomp}) and $\tilw=(\lambda,w)\in\tilW$.

\subsection{The torus action on $\Flag_G$}\label{ss:torus} The torus $T$ acts on $\Flag_G$ by left translations. The one-dimensional torus $\GG_m$ acts on $\calO$ by dilation $s:z\mapsto sz,s\in\GG_m$. This induces an action of $\GG_m$ on $\Flag_G$, which is the so-called {\em loop rotation}. The rotation torus is denoted $\GG_m^{rot}$. Let $\tilT=\GG_m^{rot}\times T$ be the {\em augmented torus}. We are mainly interested in the $\tilT$-action on $\Flag_G$. 

Let $\tilt$ be the Lie algebra of $\tilT$. We have an identification
\begin{equation}\label{eq:tilt}
\tilt\cong\CC d\oplus\frakt=\frakh/\CC K
\end{equation}
such that $d$ corresponds to the positive generator of $X_\bullet(\GG_m^{rot})$. This induces an identification of dual spaces
\begin{equation}\label{eq:tiltdual}
\tilt^*\cong\CC\delta\oplus\frakt^*\subset\frakh^*
\end{equation}
such that $\delta$ corresponds to the dual of $d$.

\subsection{The $\tilW$-symmetry and fixed points} For each $\lambda\in X_\bullet(T)$, let $z^{-\lambda}$ be the image of $z$ under $-\lambda: \GG_m(F)\to T(F)$. The assignment 
\begin{equation}\label{eq:minusaction}
\lambda\mapsto z^{-\lambda}
\end{equation}
identifies $X_\bullet(T)$ as a subgroup of $T(F)$. We put a minus sign here in order to make some formulas appear nicer in the sequel. Let $\widetilde{N}$ be the product $X_\bullet(T)\rtimes N_G(T)\subset G(F)$. Clearly, $\widetilde{N}$ acts on $\Flag_G$ by left translation. We have a natural exact sequence
$$1\to T\to\widetilde{N}\to\tilW \to 1.$$

\begin{lemma}\label{lem:twistequiv}
The induced action of $\widetilde{N}$ on $H_*^{\tilT}(\Flag_G)$ factors through $\tilW $. The natural exact sequence
\begin{equation}\label{eq:homotwoexact}
 0\to H_2(\Flag_G)\to H_2^{\tilT}(\Flag_G)\stackrel{\pi_1}{\longrightarrow} \tilt\to 0
\end{equation}
is $\tilW $-equivariant. Here the action of $\tilW $ on $H_2(\Flag_G)$ is trivial, and the action on $\tilt$ is induced from the action described by (\ref{eq:weylaction}).
\end{lemma}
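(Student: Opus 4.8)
The plan is to realize the $\widetilde N$-action and the $\tilT$-action on $\Flag_G$ as restrictions of one ambient action. Loop rotation makes $\GG_m^{rot}$ act on $G(F)$ by automorphisms $\phi_s\colon z\mapsto sz$ preserving $I$, so $\widehat G:=\GG_m^{rot}\ltimes G(F)$ acts on $\Flag_G$; inside it lie $\tilT=\GG_m^{rot}\times T$ (a direct product, as constant loops are fixed by loop rotation) and $\widetilde N\subset G(F)$. The first step is to verify that $\widetilde N$ normalizes $\tilT$ in $\widehat G$. For $w\in N_G(T)$ this is clear; for $z^{-\lambda}$ the one identity needed is $z^{-\lambda}\,s\,z^{\lambda}=\lambda(s)\cdot s$ for $s\in\GG_m^{rot}$ (since $\phi_s(z^{\lambda})=\lambda(s)\,z^{\lambda}$), together with commutativity of $T(F)$. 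Assembling these, $\tilw=(\lambda,w)\in\tilW$ acts on $\tilT$ by conjugation via $(s,\tau)\mapsto(s,\ \lambda(s)\cdot{}^{w}\tau)$. In particular $\widehat N:=\widetilde N\cdot\tilT$ is a subgroup with $\tilT\trianglelefteq\widehat N$ and $\widehat N/\tilT\cong\widetilde N/(\widetilde N\cap\tilT)=\widetilde N/T=\tilW$.

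Granting this, the first assertion of the lemma is the general principle that if a group $\Gamma$ acts on a space and $\tilT\trianglelefteq\Gamma$, then $\Gamma/\tilT$ acts on $H^*_{\tilT}$: modelling $H^*_{\tilT}(\Flag_G)$ by $\EE\widehat N\times_{\tilT}\Flag_G$, the group $\widehat N$ acts with $\tilT$ acting trivially on cohomology, so $\tilW=\widehat N/\tilT$ acts, and the $\widetilde N$-action factors through $\tilW=\widetilde N/T$ because $T=\widetilde N\cap\tilT$ acts trivially. Next, (\ref{eq:homotwoexact}) is the low-degree homology exact sequence of the fibration $\Flag_G\to\EE\widehat N\times_{\tilT}\Flag_G\to\BB\tilT$ (a short exact sequence because $H_1(\Flag_G)=0$), and this fibration is $\widehat N$-equivariant, $\widehat N$ acting on the base $\BB\tilT=\EE\widehat N/\tilT$ through the conjugation action of $\tilW$ on $\tilT$. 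Hence (\ref{eq:homotwoexact}) is $\tilW$-equivariant, and it remains only to identify the two outer actions.

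The action on $\tilt\cong H_2(\BB\tilT)$ is the differential of $\tilw\mapsto(\text{conjugation by }\tilw)\colon\tilT\to\tilT$; differentiating $(s,\tau)\mapsto(s,\ \lambda(s)\cdot{}^{w}\tau)$ at the identity and writing a vector of $\tilt=\CC d\oplus\frakt$ as $(u,\xi)$ gives $(u,\xi)\mapsto(u,\ w\xi+u\lambda)$, which is exactly the action induced on $\frakh/\CC K$ by formula (\ref{eq:weylaction}). For $H_2(\Flag_G)$: the map $H_2(\Flag_G)\hookrightarrow H_2^{\tilT}(\Flag_G)$ is induced by $x\mapsto[(e_0,x)]$ for a point $e_0\in\EE\widehat N$, and using a path from $e_0$ to $n\cdot e_0$ in the contractible space $\EE\widehat N$ one checks that this map intertwines the \emph{honest} $\widetilde N$-action on $H_*(\Flag_G)$ with the $\tilW$-action on $H_*^{\tilT}(\Flag_G)$. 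But $\widetilde N\subset G(F)$, and $G(F)$ is connected because $G$ is simply connected, so every element of $\widetilde N$ is joined to the identity by a path and therefore acts trivially on $H_*(\Flag_G)$; hence $\tilW$ acts trivially on the subspace $H_2(\Flag_G)$.

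I expect the real work to be bookkeeping rather than conceptual: keeping the semidirect-product and loop-rotation conventions consistent, and checking that the minus sign in $\lambda\mapsto z^{-\lambda}$ (inserted, as the text remarks, precisely for this purpose) is what makes the conjugation action come out as $(u,\xi)\mapsto(u,w\xi+u\lambda)$, so matching (\ref{eq:weylaction}) rather than its twist by $\lambda\mapsto-\lambda$. A subsidiary point is the usual ind-scheme bookkeeping — $\EE\widehat N\times_{\tilT}\Flag_G$ and (\ref{eq:homotwoexact}) must be read as colimits over $\Flag_G=\bigcup X_n$, with each $\tilde n\in\widetilde N$ carrying $X_n$ into some $X_m$ — which is handled exactly as in section \ref{s:closure} using the hypotheses of section \ref{ss:assumptions}.
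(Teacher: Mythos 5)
Your proof is correct and is essentially the paper's argument in slightly more formal dress: your identity $z^{-\lambda}sz^{\lambda}=\lambda(s)\cdot s$ is exactly the commutation computation the paper performs on $\Flag_G$, your Borel-construction/normal-subgroup principle replaces the paper's appeal to homotopy invariance for the connected group $T$ commuting with $\tilT$, and the triviality on $H_2(\Flag_G)$ via connectedness of $G(F)$ and the identification of the induced action on $\tilt$ with (\ref{eq:weylaction}) are the same as in the paper.
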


\begin{proof}
The action of $\widetilde{N}$ on $\Flag_G$ come from the action of the {\em connected} group ind-scheme $G(F)$, hence by the homotopy invariance of homology, the action of $\widetilde{N}$ on the ordinary homology of $H_*(\Flag_G)$ is trivial. This verifies the equivariance of the first arrow in (\ref{eq:homotwoexact}).

For $(s,t)\in \GG_m^{rot}\times T=\tilT, (\lambda, n)\in X_\bullet(T)\rtimes N_G(T)=\widetilde{N}$ and $x\in \Flag_G$, we have
\begin{eqnarray*}
(\lambda,n)\cdot(s,t)\cdot x &=& z^{-\lambda}nts(x)\\
&=& ntn^{-1}z^{-\lambda}ns(x)\\
&=& ntn^{-1}s^{\lambda}s(z^{-\lambda}nx)\\
&=& (ntn^{-1},s^{\lambda})\cdot(\lambda,n)\cdot x.
\end{eqnarray*}
In particular the action of the neutral component $T$ of $\widetilde{N}$ commutes with the action of $\tilT$. By homotopy invariance, it acts trivially on $H_*^{\tilT}(\Flag_G)$. Therefore the action of $\widetilde{N}$ factors through $\tilW$.

The above calculation, passed on to homology, verifies that the projection $\pi_1$ in (\ref{eq:homotwoexact}) is equivariant.
\end{proof}
\begin{remark}
It is our choice of the minus sign in (\ref{eq:minusaction}) that makes $\pi_1$ equivariant. 
\end{remark}

\subsection{The fixed points} Let $\Flag_G^T$ be the $T$-fixed point subset of $\Flag_G$. Note that it coincides with the $\tilT$-fixed points. The group $\widetilde{N}$ acts on $\Flag_G^T$. This action factors through $\tilW $ such that $\Flag_G^T$ becomes a $\tilW $-torsor. We denote the base point of $\Flag_G$ corresponding to $I$ by $x_0$. By our convention (\ref{eq:minusaction}), if $\tilw=(\lambda,w)$, then $\tilw\cdot x_0=z^{-\lambda}n_w x_0$ for any lift $n_w\in N_G(T)$ of $w$.

\begin{cor}\label{c:twistmu}
For $\tilw\in \tilW $, we have
\begin{equation}\label{eq:actiononmu}
\mu_{\tilw\cdot x_0}=\tilw\circ\mu_{x_0}\circ\tilw^{-1}
\end{equation}
and
\begin{equation}\label{eq:actiononv}
\tilw\cdot V_{x_0}=V_{\tilw\cdot x_0}.
\end{equation}
\end{cor}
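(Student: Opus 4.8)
The plan is to deduce both identities from the functoriality of the pushforward in $\tilT$-equivariant homology, applied to left-translation by a lift of $\tilw$, together with Lemma \ref{lem:twistequiv}.

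First I would fix a lift $n=(\lambda,n_w)\in\widetilde N$ of $\tilw=(\lambda,w)$, so that $L_n(x_0)=\tilw\cdot x_0$ by the convention (\ref{eq:minusaction}). Restricting $L_n$ to the orbit point gives a (tautologically) commutative square
\[
\xymatrix{
\{x_0\}\ar[r]^{i_{x_0}}\ar[d]_{L_n} & \Flag_G\ar[d]^{L_n}\\
\{\tilw\cdot x_0\}\ar[r]^{i_{\tilw\cdot x_0}} & \Flag_G.
}
\]
Although $L_n$ is not $\tilT$-equivariant, the computation carried out in the proof of Lemma \ref{lem:twistequiv} shows that it is equivariant for the automorphism of $\tilT$ whose differential on $\tilt=\frakh/\CC K$ is the operator $\tilw$ of (\ref{eq:weylaction}); hence $L_n$ still induces maps on $\tilT$-equivariant homology, and the pushforward is functorial for such twisted-equivariant maps. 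Applying $H^{\tilT}_2(-)$ to the square therefore yields $\mu_{\tilw\cdot x_0}\circ(L_{n,*}|_{\{x_0\}})=(L_{n,*}|_{\Flag_G})\circ\mu_{x_0}$.

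It remains to identify the two vertical maps. By Lemma \ref{lem:twistequiv} the map $L_{n,*}$ on $H^{\tilT}_2(\Flag_G)$ is the action of $\tilw$, independent of the chosen lift since $T\subset\widetilde N$ acts trivially. For the other vertical map, under the canonical identifications $H^{\tilT}_2(\{x_0\})\cong\tilt\cong H^{\tilT}_2(\{\tilw\cdot x_0\})$ it is again the operator $\tilw$ on $\tilt$: this is the differential of the twisting automorphism at a point, and can also be checked by post-composing the above identity with $\pi_1$ and using both that $\pi_1\circ\mu_x=\mathrm{id}_\tilt$ for every fixed point $x$ (because $\pi_1$ is the pushforward along $\Flag_G\to\mathrm{pt}$) and the $\tilW$-equivariance of $\pi_1$. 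Substituting, one gets $\mu_{\tilw\cdot x_0}\circ\tilw=\tilw\circ\mu_{x_0}$, which is (\ref{eq:actiononmu}); and since $V_x=\mathrm{im}(\mu_x)$ by definition and $\tilw$ acts invertibly on $\tilt$, taking images of both sides yields $V_{\tilw\cdot x_0}=\tilw\cdot V_{x_0}$, i.e.\ (\ref{eq:actiononv}).

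The argument is essentially formal; the one step needing care is the identification of $L_{n,*}|_{\{x_0\}}$ in the last paragraph, where one must keep track of the fact that $L_n$ moves the base point and intertwines the $\tilT$-action only up to the automorphism induced by $\tilw$, so that the map it induces between the two fixed-point equivariant homologies (each canonically $\tilt$) is not the identity but precisely the Weyl-group operator $\tilw$. Once this is pinned down, everything else is the naturality of $i_{x,*}$.
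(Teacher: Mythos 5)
Your proof is correct and takes essentially the same route as the paper's: the paper draws exactly this commutative diagram (naturality of $i_{x,*}$ under left translation by a lift of $\tilw$ in $\widetilde{N}$, combined with the $\tilW$-equivariance of $\pi_1$ from Lemma \ref{lem:twistequiv}), and likewise uses the fact that $\pi_1\circ\mu_x=\id_{\tilt}$ for each fixed point to identify the induced map on $H^{\tilT}_2(\{x_0\})$ with the operator $\tilw$. Nothing further is needed.
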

\begin{proof}
By lemma \ref{lem:twistequiv}, we have a commutative diagram
\[
\xymatrix{\tilt=H^{\tilT}_2(\{x_0\})\ar[rr]^{\mu_{x_0}}\ar[d]^{\tilw} & & H^{\tilT}_2(\Flag_G)\ar[rr]^{\pi_1}\ar[d]^{\tilw} & & \tilt\ar[d]^{\tilw}\\
\tilt=H^{\tilT}_2(\{\tilw\cdot x_0\})\ar[rr]^{\mu_{\tilw\cdot x_0}} & & H^{\tilT}_2(\Flag_G)\ar[rr]^{\pi_1} & & \tilt}
\]
Here the left vertical map $\tilw_*$ is induced by the restriction of the map $\tilw:\Flag_G\to\Flag_G$ to the base point $x_0$. The compositions of the arrows in both top and bottom rows are identities. This proves (\ref{eq:actiononmu}). The other formula (\ref{eq:actiononv}) follows immediately. 
\end{proof}

\subsection{Description of $H_2(\Flag_G)$}\label{ss:homotwo} The affine flag variety $\Flag_G$ has a stratification by $I$-orbits. Each $I$-orbit contains a unique fixed point $\tilw\cdot x_0$ and has dimension $\ell(\tilw)$ (\cite{GKMFL}, section 14). Therefore the one-dimensional orbits are in one-one correspondence with simple reflections in $\tilW $. Let $C_i$ be the closure of $Is_i\cdot x_0$. For all large enough $I$-orbit closures $\overline{I\tilw\cdot x_0}$, $\{[C_0],\cdots,[C_r]\}$ form a basis of $H_2(\overline{I\tilw\cdot x_0},\ZZ)$. Therefore we have a unique isomorphism
\begin{equation}\label{eq:homotwo}
\phi_*: X_\bullet(T)\oplus\ZZ K=\Span_{\ZZ}\{\check{\alpha_0},\cdots,\check{\alpha_r}\}\stackrel{\sim}{\to}H_2(\Flag_G,\ZZ)
\end{equation}
sending $\check{\alpha_i}$ to the cycle class $[C_i]$.

To state the next theorem, we need to define a $\tilW$-action on $\CC d\oplus\frakt\oplus\frakt\oplus\CC K$ by
\begin{equation}\label{eq:4termaction}
\tilw\cdot(u,\xi,\eta,v)=\left(u,w\xi+u\lambda,\eta,v+(w\xi|\lambda)+\frac{u}{2}|\lambda|^2\right)
\end{equation}
for $(u,\xi,\eta,v)\in \CC d\oplus\frakt\oplus\frakt\oplus\CC K$ and $\tilw=(\lambda,w)\in\tilW$.

\begin{theorem}\label{th:flagarr}
\begin{enumerate}
 \item[]
 \item There is a unique $\tilW$-equivariant isomorphism
\begin{equation}\label{eq:definepsi}
\psi_*: \CC d\oplus\frakt\oplus\frakt\oplus\CC K\stackrel{\sim}{\longrightarrow} H^{\tilT}_2(\Flag_G)
\end{equation}
such that:
\begin{equation}\label{eq:mux0}
\mu_{x_0}(u,\xi)=\psi_*(u,\xi,\xi,0)
\end{equation}
and the following diagram
\begin{equation}\label{eq:diagforpsi}
 \xymatrix{0\ar[r] & \frakt\oplus\CC K \ar[d]^{\phi_{*,\CC}}\ar[r]^(.4){i_{34}} & \CC d\oplus\frakt\oplus\frakt\oplus\CC K\ar[rr]^{p_{12}}\ar[d]^{\psi_*} & & \CC d\oplus\frakt\ar[r]\ar@{=}[d] & 0\\
 0\ar[r] & H_2(\Flag_G)\ar[r] & H^{\tilT}_2(\Flag_G)\ar[rr]^{\pi_1} & & \tilt\ar[r] & 0}
\end{equation} 
is an isomorphism of exact sequences. Here $i_{34}$ means the inclusion of the last two factors and $p_{12}$ means the projection to the first two factor. The bottom sequence is (\ref{eq:homotwoexact}).
 \item For $\tilw=(\lambda,w)\in\tilW$, we have 
\begin{equation}\label{eq:formulaformu}
\mu_{\tilw\cdot x_0}(u,\xi)=\psi_*\left(u,\xi,w^{-1}(\xi-u\lambda),(\xi|\lambda)-\frac{u}{2}|\lambda|^2\right).
\end{equation}
\end{enumerate}
\end{theorem}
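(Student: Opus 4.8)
The plan is to construct $\psi_*$ by hand from the building blocks we already have and then verify the two compatibility conditions. The exact sequence in the bottom row of (\ref{eq:diagforpsi}) splits after tensoring with $\CC$ because $\mu_{x_0}$ provides a section: by (\ref{eq:definesection}) the composite $\pi_1\circ\mu_{x_0}:\tilt\to\tilt$ is the identity (this is part of the Goresky--MacPherson setup, since $\mu_x$ is $i_{x,*}$ and $\pi_1$ is the canonical projection to $H_2^{\tilT}(\mathrm{pt})$). So $H^{\tilT}_2(\Flag_G)\cong \tilt\oplus H_2(\Flag_G)$ via $(\mathrm{pr},\ \mathrm{id}-\mu_{x_0}\circ\pi_1)$, and composing with $\phi_*^{-1}$ from (\ref{eq:homotwo}) on the second factor and the identification $\tilt\cong\CC d\oplus\frakt$ from (\ref{eq:tilt}) on the first gives an isomorphism $\CC d\oplus\frakt\oplus(\frakt\oplus\CC K)\to H^{\tilT}_2(\Flag_G)$. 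I would \emph{define} $\psi_*$ to be this map after reordering factors so that the $\frakt\oplus\CC K$ summand lands in slots $3,4$; by construction (\ref{eq:mux0}) holds with the diagonal $(u,\xi,\xi,0)$ (the $\mu_{x_0}$-section contributes $(u,\xi)$ to the first two coordinates, and under $\mathrm{id}-\mu_{x_0}\pi_1$ the element $\mu_{x_0}(u,\xi)$ maps to $0$ in $H_2$, which after correcting conventions is what forces the third coordinate to read $\xi$ rather than $0$) and diagram (\ref{eq:diagforpsi}) commutes on the nose.

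The real content is equivariance of $\psi_*$ for the action (\ref{eq:4termaction}), together with formula (\ref{eq:formulaformu}); I would derive the second from the first. Given $\tilW$-equivariance, uniqueness is automatic: any two equivariant lifts of the identity on $\CC d\oplus\frakt$ extending a fixed identification of the kernels differ by a $\tilW$-equivariant map $\CC d\oplus\frakt\to H_2(\Flag_G)$, and since $\tilW$ acts trivially on $H_2(\Flag_G)$ but the quotient action (\ref{eq:weylaction}) on $\tilt=\CC d\oplus\frakt$ is by nontrivial affine transformations with no nonzero invariant linear functionals landing in a trivial module except... precisely, such a map must kill the augmentation-type differences $\tilw\xi-\xi$, forcing it to be zero. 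To get equivariance I would combine Corollary \ref{c:twistmu}, which gives $\mu_{\tilw x_0}=\tilw\mu_{x_0}\tilw^{-1}$, with the defining relation (\ref{eq:mux0}): apply $\tilw$ to $\psi_*(u,\xi,\xi,0)$, use that $\psi_*$ intertwines the two actions on the kernel and on $\tilt$ (which it does by the diagram, since the kernel action is trivial on both sides and $\pi_1$ is equivariant by Lemma \ref{lem:twistequiv}), and read off that $\mu_{\tilw x_0}(u,\xi)$ equals $\psi_*$ applied to the formula (\ref{eq:4termaction}) evaluated at $(u,\xi,\xi,0)$. Plugging $\tilw=(\lambda,w)$ into (\ref{eq:4termaction}) at $(u,\xi,\xi,0)$ gives $(u,\ w\xi+u\lambda,\ \xi,\ (w\xi|\lambda)+\tfrac{u}{2}|\lambda|^2)$; reparametrizing by replacing $(u,\xi)$ with $(u, w^{-1}(\xi-u\lambda))$ — equivalently, noting $\mu_{\tilw x_0}$ must be computed at the source coordinates before the twist — yields exactly (\ref{eq:formulaformu}). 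The sign bookkeeping between (\ref{eq:weylaction}) and (\ref{eq:4termaction}) (one is the action on $\frakh$, the other its "mirror" so that the diagonal embedding $\xi\mapsto(\xi,\xi)$ is equivariant) is where I would be most careful.

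The step I expect to be the main obstacle is \textbf{pinning down the $\CC K$-coordinate}, i.e., checking that the isomorphism $\phi_*$ of (\ref{eq:homotwo}), which is defined via cycle classes $[C_i]$ in large $I$-orbit closures, is compatible with the affine-Weyl bookkeeping in the way that makes the fourth coordinate of (\ref{eq:formulaformu}) come out as $(\xi|\lambda)-\tfrac{u}{2}|\lambda|^2$ rather than with some rescaling or shift. Concretely, one must verify that the central direction $\CC K = \ZZ\check\alpha_0 + \cdots$ inside $H_2(\Flag_G)$ is detected correctly by pairing against $\delta\in\tilt^*$ under (\ref{eq:tiltdual}), and that the normalization $\langle d,\delta\rangle = \langle K,\Lambda_0\rangle = 1$ together with $(\theta|\theta)=2$ makes the quadratic term have coefficient exactly $\tfrac12$. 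I would handle this by testing (\ref{eq:formulaformu}) on a single well-chosen element — the simple reflection $s_0=(\check\theta,\ s_\theta)$ acting on $x_0$ — and computing $\mu_{s_0 x_0}$ both via the geometry of the rank-one orbit $C_0$ and via the formula, using that $s_0\cdot x_0$ is the base point translated so that the nontrivial cycle it "sees" is $[C_0]=\phi_*(\check\alpha_0)$; matching these fixes all constants. Once the $K$-coordinate is correct for $s_0$, equivariance under all of $\tilW$ (already established) propagates it everywhere, so no further case analysis is needed.
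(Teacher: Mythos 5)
Your construction of $\psi_*$ and your uniqueness discussion are fine (in fact uniqueness is immediate: the diagram forces $\psi_*|_{\frakt\oplus\CC K}=\phi_{*,\CC}$ and (\ref{eq:mux0}) forces $\psi_*(u,\xi,\eta,v)=\mu_{x_0}(u,\xi)+\phi_{*,\CC}(\eta-\xi,v)$, no equivariance needed). Your derivation of (\ref{eq:formulaformu}) from equivariance plus Corollary \ref{c:twistmu} is also correct as algebra. The genuine gap is in the equivariance step itself. You argue that $\psi_*$ intertwines the $\tilW$-actions because it does so on the kernel $H_2(\Flag_G)$ (both actions trivial) and on the quotient $\tilt$ ($\pi_1$ is equivariant by Lemma \ref{lem:twistequiv}). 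That does not determine the action on the middle term of the extension (\ref{eq:homotwoexact}): the action of $\tilw$ on $H^{\tilT}_2(\Flag_G)$ differs from the ``block-diagonal'' one by an undetermined cocycle $B_{\tilw}:\tilt\to H_2(\Flag_G)$, and the entire content of the theorem is that this cocycle is the one appearing in (\ref{eq:4termaction}), namely $B_{\tilw}(u,\xi)=\bigl((w\xi|\lambda)+\tfrac{u}{2}|\lambda|^2\bigr)K$. Reading off ``$\tilw$ applied to $\psi_*(u,\xi,\xi,0)$ equals $\psi_*$ of (\ref{eq:4termaction})'' assumes exactly this. Corollary \ref{c:twistmu} cannot break the circle by itself: it is one identity relating two unknowns (the cocycle and the sections $\mu_{\tilw\cdot x_0}$).

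Your proposed patch --- a single geometric check at $s_0$ --- is the right kind of input but is not enough, and it is deployed in the wrong logical order. The paper computes $A_{\tilw}:=\mu_{\tilw\cdot x_0}-\mu_{x_0}$ geometrically for \emph{every} simple reflection $s_0,\dots,s_r$, using the $\tilT$-weight $-\alpha_i$ on the curve $C_i$ (Lemma \ref{lem:p1action}) and the standard identity $(\mu_\infty-\mu_0)(1)=[\PP^1]$, obtaining $\phi_{*,\CC}\circ A_{s_i}(u,\xi)=\langle(u,\xi),-\alpha_i\rangle\check{\alpha_i}$; the cocycle identity for $\{A_{\tilw}\}$ then determines $A_{\tilw}$ for all $\tilw$, which yields (\ref{eq:formulaformu}) directly and, only afterwards, identifies $B_{\tilw}$ with (\ref{eq:4termaction}) to conclude equivariance. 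In your scheme the finite simple reflections $s_1,\dots,s_r$ are never checked, yet they are needed to pin down the $\frakt$-component of the cocycle (equivalently, the fact that the $\eta$-coordinate of $\mu_{w\cdot x_0}$ is $w^{-1}\xi$ rather than $\xi$), and the statement ``equivariance (already established) propagates it everywhere'' rests on the circular argument above. To repair the proposal: do the rank-one computation on all $C_i$, $i=0,\dots,r$, propagate by the cocycle identity, and deduce equivariance at the end rather than assuming it at the start.
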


Because the proof of the theorem involves some tedious calculation, we postpone it to section \ref{s:proof}.

With explicit knowledge about the fixed point arrangement, we are ready to apply theorem \ref{th:closure}.

\begin{lemma}\label{lem:verifyass} 
The $\tilT$-action on $\Flag_G$ satisfies the assumptions in section \ref{ss:assumptions}.
\end{lemma}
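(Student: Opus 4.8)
The plan is to verify the three assumptions of section~\ref{ss:assumptions} one at a time for the $\tilT$-action on $\Flag_G$, using the standard structure theory of affine flag varieties referenced in \cite{GKMFL}. The ind-scheme structure on $\Flag_G$ is given by the filtration $\Flag_G=\bigcup_n X_n$, where $X_n=\overline{I\tilw\cdot x_0}$ runs over (an exhausting chain of) Schubert varieties, each of which is a projective variety; since the Schubert varieties are closed in one another, each $X_n\hookrightarrow X_{n+1}$ is a closed embedding, so $\Flag_G$ is strict ind-projective.

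First I would check that each $X_n$ is $\tilT$-stable: the Iwahori $I$ is stable under conjugation by $T$ and, being defined by the valuation filtration on $G(\calO)$, is also stable under the loop rotation $\GG_m^{rot}$; hence $I$ is $\tilT$-stable, and so is every $I$-orbit closure, i.e.\ every $X_n$. Next, $\tilT$-equivariant formality: the Schubert variety $X_n$ has a cell decomposition by the $I$-orbits it contains, each of which is an affine space, so $H^*(X_n)$ is concentrated in even degrees and $X_n$ is equivariantly formal (in the sense of \cite{GKM}, using that even cohomology implies the Leray--Hirsch/collapse of the spectral sequence for the Borel construction). Second, the $\tilT$-fixed points of $X_n$ are contained in $\Flag_G^T$, which by the discussion in section~\ref{s:flagarr} is a $\tilW$-torsor, hence discrete; since $X_n$ is a finite union of $I$-orbits, each containing exactly one fixed point, $X_n$ has only finitely many $\tilT$-fixed points. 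Third, $H_2(\Flag_G)$ is finite dimensional: this is exactly (\ref{eq:homotwo}), which identifies $H_2(\Flag_G,\ZZ)$ with $X_\bullet(T)\oplus\ZZ K$, a lattice of rank $r+1$; tensoring with $\CC$ gives an $(r+1)$-dimensional space.

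I do not expect any serious obstacle here --- everything reduces to cited facts about Schubert cells in $\Flag_G$. The one point that deserves a sentence of care is $\tilT$-stability of $I$ (equivalently, that loop rotation normalizes $I$), since it is the loop-rotation factor $\GG_m^{rot}$ rather than $T$ that is the ``new'' piece of the torus; this follows because the dilation $z\mapsto sz$ preserves $\calO$ and the reduction map $G(\calO)\to G(\CC)$, hence preserves $I$. With the three assumptions in hand, Theorem~\ref{th:closure} applies to $\tilT\curvearrowright\Flag_G$, which is what the lemma asserts.
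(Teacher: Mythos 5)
Your proof is correct and follows essentially the same route as the paper: equivariant formality of each Schubert variety from its $I$-orbit cell decomposition (even cohomology), finiteness of fixed points from the one-fixed-point-per-orbit fact, and finite-dimensionality of $H_2(\Flag_G)$ from the identification in section \ref{ss:homotwo}. The extra sentence you add about $\tilT$-stability of $I$ under loop rotation is a reasonable point of care that the paper leaves implicit.
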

\begin{proof}
The closure of each $I$-orbit of $\Flag_G$ is a projective variety stable under $\tilT$-action. Since each $I$-orbit is an affine space, its cohomology vanishes at odd degrees. Therefore each orbit closure is equivariantly formal. This verifies the assumption (1). 

Since each $I$-orbit contains a unique $T$-fixed point, and each $I$-orbit closure consists of finitely many $I$-orbits, therefore each $I$-orbit closure contains only finitely many $T$-fixed points. This verifies assumption (2).

The verification of (3) is done in section \ref{ss:homotwo}.
\end{proof}

\begin{theorem}\label{th:cone}
Under the isomorphism $\psi_*$, $$\Spec\widehat{H}^*_{\tilT}(\Flag_G)\subset H^{\tilT}_2(\Flag_G)\stackrel{\psi_*^{-1}}{\longrightarrow} \AA^1_d\times\frakt\times\frakt\times\AA^1_K$$ is the nondegenerate quadric cone $Q$ given by the equation
\begin{equation}\label{eq:quadric}
q(u,\xi,\eta,v)=|\xi|^2-|\eta|^2-2uv=0.
\end{equation}
Here $(u,\xi,\eta,v)$ are the coordinates for $\AA^1_d\times\frakt\times\frakt\times\AA^1_K$. 
\end{theorem}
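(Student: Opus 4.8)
The plan is to combine Theorem~\ref{th:closure} with the explicit formulas of Theorem~\ref{th:flagarr}. By Lemma~\ref{lem:verifyass}, the $\tilT$-action on $\Flag_G$ satisfies the assumptions of section~\ref{ss:assumptions}, so Theorem~\ref{th:closure} identifies $\Spec\widehat{H}^*_{\tilT}(\Flag_G)$ with the Zariski closure (reduced) of the fixed point arrangement $V=\bigcup_{\tilw\in\tilW}V_{\tilw\cdot x_0}$ inside $H^{\tilT}_2(\Flag_G)$. Via $\psi_*^{-1}$, the subspace $V_{\tilw\cdot x_0}$ is the image of $\mu_{\tilw\cdot x_0}$, which by~(\ref{eq:formulaformu}) is the two-dimensional plane
\[
\Pi_{\tilw}=\left\{\left(u,\xi,w^{-1}(\xi-u\lambda),(\xi|\lambda)-\tfrac{u}{2}|\lambda|^2\right)\ :\ (u,\xi)\in\CC d\oplus\frakt\right\}\subset\AA^1_d\times\frakt\times\frakt\times\AA^1_K.
\]
So the task is to show that $\overline{\bigcup_{\tilw}\Pi_{\tilw}}=Q$, where $Q=\{q=0\}$ with $q(u,\xi,\eta,v)=|\xi|^2-|\eta|^2-2uv$.

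First I would check the easy inclusion $\bigcup_{\tilw}\Pi_{\tilw}\subseteq Q$ by a direct substitution: on $\Pi_{\tilw}$ one has $\eta=w^{-1}(\xi-u\lambda)$, so $|\eta|^2=|\xi-u\lambda|^2=|\xi|^2-2u(\xi|\lambda)+u^2|\lambda|^2$ since $w$ preserves the Killing form, while $2uv=2u(\xi|\lambda)-u^2|\lambda|^2$; hence $|\xi|^2-|\eta|^2-2uv=0$. Since $Q$ is closed, this gives $\overline{V}\subseteq Q$. Next I would verify that $q$ is nondegenerate: the quadratic form $|\xi|^2-|\eta|^2-2uv$ is a sum of the nondegenerate form $|\xi|^2$ on $\frakt$, the nondegenerate form $-|\eta|^2$ on the second copy of $\frakt$, and the hyperbolic form $-2uv$ on $\AA^1_d\times\AA^1_K$, so it is nondegenerate on the whole $(2r+2)$-dimensional space; thus $Q$ is an irreducible quadric hypersurface (a cone with vertex the origin), of dimension $2r+1$.

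The substantive step is the reverse inclusion $Q\subseteq\overline{V}$; equivalently, since $Q$ is irreducible of dimension $2r+1$ and $\overline V$ is a closed subvariety of $Q$, it suffices to show $\dim\overline V=2r+1$, i.e.\ that the union of the planes $\Pi_{\tilw}$ is not contained in any proper subvariety of $Q$. I would argue this by exhibiting enough translates. Fixing $w=1$ and letting $\lambda$ range over $X_\bullet(T)$, the planes $\Pi_{(\lambda,1)}$ already sweep out a constructible set whose closure I claim is all of $Q$: the image contains all points with $u\neq 0$ of the form $(u,\xi,\xi/u\cdot u - \lambda,\dots)$—more precisely, setting $u=1$ one gets $\eta=\xi-\lambda$ and $v=(\xi|\lambda)-\tfrac12|\lambda|^2$, and as $\xi\in\frakt$ varies continuously and $\lambda$ varies over the lattice, the Zariski closure of $\{(1,\xi,\xi-\lambda,(\xi|\lambda)-\tfrac12|\lambda|^2)\}$ is cut out by exactly the equation $|\xi|^2-|\eta|^2-2v=0$ (one checks the projection to $(\xi,\eta)\in\frakt\times\frakt$ is dominant, since for fixed $\xi$ the differences $\xi-\lambda$ are Zariski-dense in $\frakt$, and then $v$ is forced). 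Rescaling by the $\CC d$-direction (the $u$-coordinate) spreads this over the cone, giving $\dim\overline V\geq 2r+1$, hence $\overline V=Q$.

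The main obstacle I anticipate is the density argument in the last step: one must be careful that although there are infinitely many lattice points $\lambda$, they are only Zariski-dense in $\frakt$ after taking closure, and one needs the combination $(\xi,\xi-\lambda)$—rather than $\lambda$ alone—to have dominant image in $\frakt\times\frakt$. This is where the continuous parameter $\xi$ is essential, and I would spell out that for each fixed $\lambda$ the plane already surjects onto the $\xi$-line, so letting both vary the image of the projection to $\frakt\times\frakt$ contains, for every $\xi_0$, the coset $\{\xi_0\}\times(\xi_0-X_\bullet(T))$, whose union over $\xi_0$ is Zariski-dense. Everything else is the routine substitution verifying membership in $Q$ and the linear-algebra check of nondegeneracy.
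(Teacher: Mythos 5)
Your proposal is correct and follows essentially the same route as the paper: membership in $Q$ by direct substitution using (\ref{eq:formulaformu}), then density of $V$ via the $w=1$ planes, using that over $u\neq 0$ the coordinate $v$ is determined by $(u,\xi,\eta)$ (the paper phrases this as birationality of the projection $\pi_Q$ to the first three factors, you phrase it as irreducibility of $Q$ plus a dimension count) and that the shifted lattices $\xi-u X_\bullet(T)$ are Zariski dense in $\frakt$.
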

\begin{proof}
Using the explicit formula (\ref{eq:formulaformu}), one easily checks that the image $V_{\tilw\cdot x_0}$ of any $\mu_{\tilw\cdot x_0}$ lies on the quadric cone $Q$. According to theorem \ref{th:closure}, we have to check that the fixed point arrangement $V=\bigcup_{\tilw\in\tilW}V_{\tilw\cdot x_0}$ is dense in $Q$. The projection to the first three factors 
\begin{equation}\label{eq:piQ}
 \pi_{Q}=(\pi_0,\pi_1,\pi_2): Q\to \GG_m\times\frakt\times\frakt
\end{equation}
is birational. Therefore it suffices to check that $\pi(V)$ is dense in $\GG_m\times\frakt\times\frakt$.
 
On the other hand, by theorem (\ref{eq:formulaformu}), $\pi(V)$ contains points of the form $(u,\xi,w^{-1}(\xi-u\lambda))$ for all $\xi\in\frakt,\lambda\in X_\bullet(T)$ and $u\neq0$. For fixed $\xi,u$, taking $w=1$, the set $\{\xi-u\lambda|\lambda\in X_\bullet(T)\}$, being a shifted lattice in $\frakt$, is clearly Zariski dense in $\frakt$. Therefore $\pi(V)$ is Zariski dense in $\GG_m\times\frakt\times\frakt$. This proves the theorem.
\end{proof}


\section{The equivariant cohomology of affine flag varieties}\label{s:fullequiv}

\subsection{Further notations}
Recall that 
\begin{equation}\label{eq:definefi}
H^*(\BB G)\cong \Sym(\frakt^*)^W\cong \CC[f_1,\cdots,f_r]
\end{equation}
where $\deg(f_i)=2d_i+2$ and $d_1,\cdots,d_r$ are the exponents of $\frakg$. We normalize $f_1$ to be the Killing quadratic form $|-|^2$ we chose before. We have a natural identification $\Spec H^*(\BB T)=\frakt$ and $\Spec H^*(\BB G)=\ch$, where $\ch$ is by definition the invariant quotient $\frakt// W$.

The group $G(\calO)$ naturally acts on the affine Grassmannian $\Grass_G$ by left translation and $\GG_m^{rot}$ acts on $\Grass_G$ by loop rotation. The equivariant cohomology $H^*_{G(\calO)\rtimes\GG_m^{rot}}(\Grass_G)$ can be viewed as the $\GG_m^{rot}$-equivariant cohomology of the stack $[G(\calO)\backslash G(F)/G(\calO)]$. We will use the following theorem of Bezrukavnikov-Finkelberg. 

\begin{theorem}[\cite{BeFin}, theorem 1]\label{th:cohoGr}
\begin{enumerate}
 \item[]
 \item We have a natural isomorphism
\begin{equation}\label{eq:cohoGr}
\Spec H^*_{\GG_m^{rot}}([G(\calO)\backslash G(F)/G(\calO)])\cong N(\Delta(\ch),\ch\times\ch)
\end{equation}
where $N(\Delta(\ch),\ch\times\ch)$ is the total space of the deformation to the normal cone of the diagonal copy of $\ch$ inside $\ch\times\ch$.
 \item Under the isomorphism (\ref{eq:cohoGr}), the natural projection $N(\Delta(\ch),\ch\times\ch)\to \AA^1\times\ch\times\ch$ corresponds to the natural morphism
\begin{equation}\label{eq:projgr}
\pi_{\Grass}=(\pi_0,\pi_1,\pi_2):\Spec H^*_{\GG_m^{rot}}([G(\calO)\backslash G(F)/G(\calO)])\to \AA^1\times\ch\times\ch
\end{equation}
Here $\pi_0$ is given by the $\GG_m^{rot}$-action, $\pi_1$ and $\pi_2$ are the left and right $G(\calO)$-quotient structures. 
\end{enumerate}
\end{theorem}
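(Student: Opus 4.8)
This statement is quoted from \cite{BeFin}; here I describe the strategy I would use to prove it. The plan is to regard $\Spec H^*_{\GG_m^{rot}}([G(\calO)\backslash G(F)/G(\calO)])$ as a flat family over the affine line $\AA^1_\hbar=\Spec H^*_{\GG_m^{rot}}(\mathrm{pt})$ and to pin it down, together with its groupoid structure over $\ch$, by analysing the two fibres $\hbar=0$ and $\hbar\neq0$ separately --- mirroring the way the deformation to the normal cone is recovered from its special and general fibres. First I would record flatness: $\Grass_G$ is stratified by the $G(\calO)$-orbits $\Grass_G^\lambda$, each a vector bundle over a partial flag variety $G/P_\lambda$, so $H^{\mathrm{odd}}(\Grass_G)=0$ and all orbit closures are $(G(\calO)\rtimes\GG_m^{rot})$-equivariantly formal; hence $H^*_{G(\calO)\rtimes\GG_m^{rot}}(\Grass_G)$ is a free graded module over $H^*(\BB(G\times\GG_m^{rot}))=\calO(\ch\times\AA^1_\hbar)$. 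The Hecke-stack convolution equips $\Spec H^*_{\GG_m^{rot}}([G(\calO)\backslash G(F)/G(\calO)])$ with a groupoid structure over $\ch$ whose source and target are $\pi_1,\pi_2$ (``remember the left / right $G(\calO)$-torsor'') and whose map to $\AA^1_\hbar$ is $\pi_0$.

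At $\hbar=0$ the fibre is $\Spec H^*_{G(\calO)}(\Grass_G)$, and here the work is to identify it --- with its convolution coproduct and both $\calO(\ch)$-module structures --- with $\calO$ of the total space of $T\ch$, the special fibre of $N(\Delta(\ch),\ch\times\ch)$. By Bott's computation of the cohomology of the based loop space of $G$, $H^*(\Grass_G)=\CC[h_1,\dots,h_r]$ with $\deg h_i=2d_i$, so by equivariant formality $H^*_{G(\calO)}(\Grass_G)=\calO(\ch)\otimes_\CC\CC[h_1,\dots,h_r]$ as an $\calO(\ch)$-algebra; the point is that Ginzburg's description of $H^{G(\calO)}_\bullet(\Grass_G)$ as the ring of functions on the group scheme of regular centralizers in $\check G$ (transported to cohomology, and to $\ch$, via the Killing form (\ref{eq:definesigma})) exhibits $\Spec H^*_{G(\calO)}(\Grass_G)$ as a commutative group scheme over $\ch$ --- forcing $\pi_1=\pi_2$ at $\hbar=0$ and the convolution to be fibrewise addition, i.e.\ exactly the groupoid $T\ch\rightrightarrows\ch$.

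For $\hbar\neq0$ I would invert $\hbar$ and apply the localization theorem for $\GG_m^{rot}$, reducing $H^*_{G(\calO)\rtimes\GG_m^{rot}}(\Grass_G)[\hbar^{-1}]$ to the loop-rotation fixed locus $\coprod_\lambda G/P_\lambda$ (the zero sections of the bundles $\Grass_G^\lambda\to G/P_\lambda$). With loop rotation turned on, the incoming and outgoing tangent directions at a fixed point carry opposite $\hbar$-weights; one checks this makes the two $\calO(\ch)$-structures algebraically independent, so that $(\pi_1,\pi_2)$ becomes an isomorphism onto $\ch\times\ch$ and the convolution becomes the composition of the pair groupoid $\ch\times\ch\rightrightarrows\ch$. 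Thus the general fibre is $(\ch\times\ch)\times\GG_m$. Finally I would assemble: $(\pi_0,\pi_1,\pi_2)$ is a map of flat $\AA^1_\hbar$-schemes to $\AA^1_\hbar\times\ch\times\ch$ which is the pair groupoid generically and the tangent-bundle groupoid on the special fibre; since $N(\Delta(\ch),\ch\times\ch)$ is the unique flat $\AA^1$-deformation interpolating between these and compatible with $\mathrm{pr}_1,\mathrm{pr}_2$ --- equivalently, since one verifies directly that the generators $h_i$ of the previous paragraph are $\hbar^{-1}(\pi_2^*f_i-\pi_1^*f_i)$ --- the two families coincide, and the groupoid structures agree because flatness makes them determined by the general fibre.

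The main obstacle is the $\hbar=0$ step: identifying $H^*_{G(\calO)}(\Grass_G)$ as a \emph{groupoid} over $\ch$, not merely as a module, which genuinely requires geometric Satake / Ginzburg's computation of the equivariant homology of the affine Grassmannian and so goes beyond the Goresky--MacPherson torus technique used elsewhere in this note. A secondary subtlety, to be dispatched in the assembling step, is confirming that the $\AA^1$-family produced really is the deformation to the \emph{normal cone} of the diagonal --- that the degeneration of $\ch\times\ch$ to $T\ch$ is the Rees degeneration of the ideal of $\Delta(\ch)$. One could instead try to deduce the statement from the analogous computation for $\Flag_G$ by taking $W\times W$-invariants along the two $G/B$-fibrations $[I\backslash G(F)/I]\to[G(\calO)\backslash G(F)/G(\calO)]$, but this merely relocates the same difficulty.
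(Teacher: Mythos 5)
The paper does not prove this statement: Theorem \ref{th:cohoGr} is imported verbatim from Bezrukavnikov--Finkelberg (\cite{BeFin}, Theorem 1) and is used as a black box to deduce Theorem \ref{th:cohoFlag}, so there is no internal argument to compare your proposal against. Judged on its own terms, your outline is a faithful reconstruction of the standard (and essentially the actual \cite{BeFin}) strategy: equivariant formality gives flatness over $\AA^1_\hbar=\Spec H^*(\BB\GG_m^{rot})$, the special fibre is identified with the tangent groupoid $T\ch\rightrightarrows\ch$ via Ginzburg's description of $H^{G(\calO)}_\bullet(\Grass_G)$ as functions on the universal regular centralizers, the generic fibre is identified with the pair groupoid $\ch\times\ch$ by $\GG_m^{rot}$-localization, and the two are glued by recognizing the Rees degeneration of the ideal of $\Delta(\ch)$ (your formula $h_i=\hbar^{-1}(\pi_2^*f_i-\pi_1^*f_i)$ is consistent with the explicit presentation the paper records in Proposition \ref{p:flagexplicit} after base change to $\frakt\times\frakt$).

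That said, you should be clear that what you have written is a programme, not a proof. The two steps you yourself flag as the hard ones are exactly where all the content lies: (a) identifying the $\hbar=0$ fibre as a \emph{group scheme over $\ch$} with the convolution coproduct matching fibrewise addition on $T\ch$ --- this requires geometric Satake / Ginzburg's theorem and is not obtainable from equivariant formality plus Bott's computation alone (those only give the bigraded vector space); and (b) upgrading ``flat family with prescribed special and generic fibres'' to ``equals the deformation to the normal cone,'' which needs the concrete Rees-algebra computation rather than a uniqueness-of-interpolation principle (flat $\AA^1$-families with given fibres are not unique in general). Since the paper simply cites \cite{BeFin} here, the appropriate resolution is either to do the same or to carry out (a) and (b) in full.
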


Now back to affine flag varieties. Note that the $T$-equivariant cohomology and the $I$-equivariant cohomology of $\Flag_G$ are the same because $\ker(I\to T)$ is a pro-unipotent algebraic group. Using stack notations, we have
\begin{equation*}
H^*_{\tilT}(\Flag_G)\cong H^*_{\GG_m^{rot}}([I\backslash G(F)/I]).
\end{equation*}

Parallel to the affine Grassmannian case, we have
\begin{theorem}\label{th:cohoFlag}
\begin{enumerate}
 \item[]
 \item We have a natural isomorphism
\begin{equation}\label{eq:cohoFlag}
\Spec H^*_{\GG_m^{rot}}([I\backslash G(F)/I])\cong N(\frakt\times_{\ch}\frakt,\frakt\times\frakt) 
\end{equation}
where $N(\frakt\times_{\ch}\frakt,\frakt\times\frakt)$ is the total space of the deformation to the normal cone of the fiber product $\frakt\times_{\ch}\frakt$ inside $\frakt\times\frakt$.
 \item Under the isomorphism (\ref{eq:cohoFlag}), the natural projection $N(\frakt\times_{\ch}\frakt,\frakt\times\frakt)\to \AA^1\times\frakt\times\frakt$ corresponds to the natural morphism
\begin{equation}\label{eq:projflag}
\pi_{\Flag}=(\pi_0,\pi_1,\pi_2):\Spec H^*_{\GG_m^{rot}}([I\backslash G(F)/I])\to \AA^1\times\frakt\times\frakt.
\end{equation}
Here $\pi_0$ is given by the $\GG_m^{rot}$-action, $\pi_1$ and $\pi_2$ are given by the left and right $I$-quotient structures.
\end{enumerate}
\end{theorem}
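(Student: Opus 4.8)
The plan is to deduce Theorem \ref{th:cohoFlag} from the Bezrukavnikov--Finkelberg Theorem \ref{th:cohoGr} by combining two ingredients. First, $[I\backslash G(F)/I]$ is a $G/B\times G/B$-bundle over $[G(\calO)\backslash G(F)/G(\calO)]$ (quotient by $G(\calO)/I\cong G/B$ on the left and on the right, using $p_G\colon\Flag_G\to\Grass_G$), and since $G/B$ has cohomology in even degrees only this translates into a base change of rings along $\frakt\to\ch$. Second, the deformation to the normal cone commutes with flat base change, and the adjoint quotient $\frakt\to\ch$ is finite flat. Together these turn the Bezrukavnikov--Finkelberg description of $\Spec H^*_{\GG_m^{rot}}([G(\calO)\backslash G(F)/G(\calO)])$ into the asserted description of $\Spec H^*_{\GG_m^{rot}}([I\backslash G(F)/I])$, groupoid structure included.

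\textbf{Step 1 (Leray--Hirsch).} Since $H^*(G/B)$ is concentrated in even degrees, Leray--Hirsch applies to the $G/B\times G/B$-bundle above, $\GG_m^{rot}$-equivariantly; because the two $G/B$-factors are associated to the tautological $G(\calO)$-bundles, the Leray--Hirsch classes are the equivariant Chern classes of the tautological line bundles, and the module isomorphism upgrades to a ring isomorphism
\begin{equation*}
H^*_{\GG_m^{rot}}([I\backslash G(F)/I])\;\cong\;H^*(\BB T)\otimes_{H^*(\BB G),\,\pi_1^*}H^*_{\GG_m^{rot}}([G(\calO)\backslash G(F)/G(\calO)])\otimes_{\pi_2^*,\,H^*(\BB G)}H^*(\BB T).
\end{equation*}
To make this rigorous on the ind-stack one passes to the finite-type approximations $[G(\calO)\backslash\overline{G(\calO)wG(\calO)}/G(\calO)]$ and their preimages, invokes the classical flag-bundle formula there, and takes the colimit. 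One must also check that the convolution on the flag side is the pull-back of the convolution on the Grassmannian side along these maps, which follows from the fact that $p_G\times p_G$ intertwines the two multiplication morphisms (and their defining twisted-product correspondences). Taking $\Spec$ and feeding in Theorem \ref{th:cohoGr} yields a natural isomorphism of groupoids over $\AA^1\times\frakt$
\begin{equation*}
\Spec H^*_{\GG_m^{rot}}([I\backslash G(F)/I])\;\cong\;\frakt\times_{\ch}N(\Delta(\ch),\ch\times\ch)\times_{\ch}\frakt,
\end{equation*}
the fibre products being formed along $\pi_1$ and $\pi_2$, and the right-hand side being the pull-back groupoid of the Bezrukavnikov--Finkelberg groupoid along $\frakt\to\ch$.

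\textbf{Step 2 (base change for the deformation to the normal cone) and conclusion.} For a closed subscheme $Z\subset Y$ the deformation space $N(Z,Y)$ is $\Spec_Y$ of the extended Rees algebra of the ideal sheaf of $Z$, and formation of this algebra commutes with flat base change $f\colon Y'\to Y$, with $Y'\times_YZ$ in the role of $Z$. Applying this to the finite flat morphism $\frakt\times\frakt\to\ch\times\ch$ (two copies of $\frakt\to\ch$) with $Z=\Delta(\ch)$, so that $(\frakt\times\frakt)\times_{\ch\times\ch}\Delta(\ch)=\frakt\times_{\ch}\frakt$, and noting that $N(\Delta(\ch),\ch\times\ch)\to\ch\times\ch$ is exactly $(\pi_1,\pi_2)$, we get
\begin{equation*}
\frakt\times_{\ch}N(\Delta(\ch),\ch\times\ch)\times_{\ch}\frakt\;\cong\;N(\frakt\times_{\ch}\frakt,\,\frakt\times\frakt)
\end{equation*}
compatibly with the projection to $\AA^1$ (unchanged under base change on $\ch\times\ch$) and with the structure map to $\frakt\times\frakt$. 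Combined with Step 1 this proves part (1), and part (2) is then immediate since $\pi_0$ is the loop-rotation parameter and $(\pi_1,\pi_2)$ is the map to $\frakt\times\frakt$; the groupoid structure on $N(\frakt\times_{\ch}\frakt,\frakt\times\frakt)$ is the pull-back groupoid, which matches the convolution by the compatibility in Step 1.

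\textbf{Main obstacle.} I expect Step 1 to be the delicate point: one must establish the flag-bundle isomorphism genuinely at the level of the ind-stack $[G(\calO)\backslash G(F)/G(\calO)]$ and $\GG_m^{rot}$-equivariantly, and — harder — verify it is an isomorphism of rings carrying the convolution of the flag side to the pull-back of the convolution of the Grassmannian side; keeping track of the left versus right $G/B$-factors through the twisted-product construction defining convolution is where care is needed. As a consistency check, passing to the subalgebras generated in degrees $0$ and $2$ on both sides recovers Theorem \ref{th:cone}: the only degree-$2$ coordinates on $N(\frakt\times_{\ch}\frakt,\frakt\times\frakt)$ beyond those on $\frakt\times\frakt$ are the loop-rotation parameter $u$ and the function $(|\xi|^2-|\eta|^2)/u$ (since $d_1=1$, i.e.\ $f_1$ is the Killing quadratic form), and the relation between them cuts out precisely the nondegenerate quadric cone $Q$.
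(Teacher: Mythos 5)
Your proposal is correct and follows essentially the same route as the paper: both reduce the theorem to the ring isomorphism $H^*_{\GG_m^{rot}}([I\backslash G(F)/I])\cong H^*(\BB T)\otimes_{H^*(\BB G)}H^*_{\GG_m^{rot}}([G(\calO)\backslash G(F)/G(\calO)])\otimes_{H^*(\BB G)}H^*(\BB T)$ and then identify $\frakt\times_{\ch}N(\Delta(\ch),\ch\times\ch)\times_{\ch}\frakt$ with $N(\frakt\times_{\ch}\frakt,\frakt\times\frakt)$ by flat base change. The only (immaterial) difference is in how that ring isomorphism is established -- you invoke Leray--Hirsch for the $G/B\times G/B$-bundle, while the paper factors the comparison map into two base-change maps $\epsilon_1,\epsilon_2$ and checks each is an isomorphism on associated graded pieces using equivariant formality of $\Grass_G$.
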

\begin{proof}
We have a natural ring homomorphism
\begin{equation*}
\epsilon: H^*(\BB T)\otimes_{H^*(\BB G)}H^*_{\GG_m^{rot}}([G(\calO)\backslash G(F)/G(\calO)])\otimes_{H^*(\BB G)}H^*(\BB T)\to H^*_{\GG_m^{rot}}([I\backslash G(F)/I]) 
\end{equation*}
It suffices to show that $\epsilon$ is an isomorphism. In fact, if it is an isomorphism, after taking spectra, we have
\begin{equation*}
\Spec H^*_{\GG_m^{rot}}([I\backslash G(F)/I])\cong \frakt\times_{\ch}\Spec H^*_{\GG_m^{rot}}([G(\calO)\backslash G(F)/G(\calO)])\times_{\ch}\frakt.
\end{equation*}
By theorem \ref{th:cohoGr}, the term on the right is
\begin{equation*}
\frakt\times_{\ch}N(\Delta(\ch),\ch\times\ch)\times_{\ch}\frakt\cong N(\frakt\times_{\ch}\frakt,\frakt\times\frakt).
\end{equation*}
Assertion (2) is clear from construction.

In order to show that $\epsilon$ is an isomorphism, we decompose it into the composite of several ring homomorphisms:
\begin{eqnarray*}
 & & H^*(\BB T)\otimes_{H^*(\BB G)}H^*_{\GG_m^{rot}}([G(\calO)\backslash G(F)/G(\calO)])\otimes_{H^*(\BB G)}H^*(\BB T)\\
 &\cong & H^*(\BB T)\otimes_{H^*(\BB G)}H^*_{G\times\GG_m^{rot}}(\Grass_G)\otimes_{H^*(\BB G)}H^*(\BB T) \\
 &\stackrel{\epsilon_1\otimes \id}{\to} & H^*_{\tilT}(\Grass_G)\otimes_{H^*(\BB G)}H^*(\BB T)\\
 & = & H^*_{\GG_m^{rot}}([I\backslash G(F)/G(\calO])\otimes_{H^*(\BB G)}H^*(\BB T)\\
 & = & H^*_{\GG_m^{rot}}([I\backslash G(F)])_{G(\calO)}\otimes_{H^*(\BB G)}H^*(\BB T)\\
 &\stackrel{\epsilon_2}{\to} & H^*_{\GG_m^{rot}}([I\backslash G(F)])_T\\
 & = & H^*_{\GG_m^{rot}}([I\backslash G(F)/I]).
\end{eqnarray*}
where
\begin{equation}\label{eq:bc}
\epsilon_1: H^*(\BB T)\otimes_{H^*(\BB G)}H^*_{G\times\GG_m^{rot}}(\Grass_G)\to H^*_{\tilT}(\Grass_G)
\end{equation}
and
\begin{equation}
\epsilon_2: H^*_{\GG_m^{rot}}([I\backslash G(F)])_{G(\calO)}\otimes_{H^*(\BB G)}H^*(\BB T)\to H^*_{\GG_m^{rot}}([I\backslash G(F)])_T
\end{equation}
are natural ring homomorphisms. Sometimes we write a group as the right subscript of the cohomology of some space to emphasize that it acts on the space from the right.

We only need to show that $\epsilon_1$ and $\epsilon_2$ are isomorphisms. Notice that the affine Grassmannian $\Grass_G$ is equivariantly formal under either $\tilT$-action or $G\times\GG_m^{rot}$-action. This can be seen by a similar argument using the $I$-orbits as in the proof of lemma \ref{lem:verifyass}. Now, each side of (\ref{eq:bc}) is equipped with a filtration by the Leray-Serre spectral sequences and $\epsilon_1$ respects these filtrations. Moreover, on the level of the associated graded rings, both sides are canonically identified with $H^*(\BB\tilT)\otimes H^*(\Grass_G)$ by equivariant formality of $\Grass_G$. In other words, $gr(\epsilon_1)$ is an isomorphism. Therefore, $\epsilon_1$ is also an isomorphism. 

The argument for $\epsilon_2$ is the same. This completes the proof.
\end{proof}

\begin{remark}
The component $\pi_1$ of $\pi_{\Flag}$ coincides with the map with the same name defined in lemma \ref{lem:twistequiv}. We will see in the next section (proposition \ref{p:totaltocone}) that $\pi_{\Flag}$ coincides with the map $\pi_{Q}$ defined in (\ref{eq:piQ}).
\end{remark}

By the definition of the deformation to the normal cone (see, for example, \cite{BeFin}, section 2.3), we have the following explicit description:

\begin{prop}\label{p:flagexplicit}
The equivariant cohomology ring $H^*_{\tilT}(\Flag_G)$ is a graded $\CC$-algebra generated by $u,\xi=(\xi_1,\cdots,\xi_r),\eta=(\eta_1,\cdots,\eta_r)$ and $v=(v_1,\cdots,v_r)$ subject to the relations
$$f_i(\xi)-f_i(\eta)=uv_i, \forall i=1,\cdots,r.$$
Here $\deg(\xi_i)=\deg(\eta_i)=\deg(u)=2,\deg(v_i)=2d_i$. The $f_i$'s are fundamental $W$-invariants in $\Sym(\frakt^*)$ as in (\ref{eq:definefi}). In terms of these coordinates, the natural morphism $\pi_{\Flag}$ in (\ref{eq:projflag}) has the form
$$\pi_{\Flag}(u,\xi,\eta,v)=(u,\xi,\eta).$$
\end{prop}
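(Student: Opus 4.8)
The plan is to unwind the definition of the deformation to the normal cone and combine it with Theorem~\ref{th:cohoFlag}. Recall that for a closed subscheme $Z\subset Y$ cut out by an ideal $\mathcal{I}$, the deformation to the normal cone $N(Z,Y)$ is $\Spec$ of the Rees algebra $\bigoplus_{n\geq 0}\mathcal{I}^n t^{-n}\subset \calO_Y[t,t^{-1}]$, where $t$ is a coordinate on $\AA^1$; equivalently $N(Z,Y)=\Spec$ of the subring of $\calO_Y[u]$ generated by $\calO_Y$ and $u^{-1}\mathcal{I}$, with $u$ the loop-rotation parameter. First I would make this explicit in our situation: here $Y=\frakt\times\frakt=\Spec \Sym(\frakt^*)\otimes\Sym(\frakt^*)$, with coordinates $\xi=(\xi_1,\dots,\xi_r)$ on the first factor and $\eta=(\eta_1,\dots,\eta_r)$ on the second, and $Z=\frakt\times_{\ch}\frakt$ is the fiber product, which is cut out by the ideal $\mathcal{I}$ generated by $f_i(\xi)-f_i(\eta)$ for $i=1,\dots,r$, since $\ch=\Spec\CC[f_1,\dots,f_r]$ and the map $\frakt\to\ch$ is $f_i$. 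So $Z$ is the vanishing locus of these $r$ elements.

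Next I would identify the Rees algebra of $\mathcal{I}$ with the abstract presentation in the statement. Introduce new generators $v_i := u^{-1}(f_i(\xi)-f_i(\eta))$, so that the Rees algebra $R$ is the $\CC$-subalgebra of $(\Sym(\frakt^*)\otimes\Sym(\frakt^*))[u]$ generated by $\xi_j,\eta_j,u,v_i$. The defining relations among these generators are exactly $u v_i = f_i(\xi)-f_i(\eta)$: indeed, any polynomial relation among the $v_i$ with coefficients in $\calO_Y[u]$ pulls back, after multiplying by a power of $u$, to a relation in the polynomial ring $\calO_Y[u]$, and because $\calO_Y[u]$ is a domain in which $u$ is a nonzerodivisor, the only relations are those generated by $uv_i=f_i(\xi)-f_i(\eta)$. (Concretely: $f_1,\dots,f_r$ form a regular sequence in $\Sym(\frakt^*)$—this is the Chevalley theorem that $\Sym(\frakt^*)$ is free over $\Sym(\frakt^*)^W$—so the $f_i(\xi)-f_i(\eta)$ form a regular sequence in $\calO_Y$, and the Rees algebra of an ideal generated by a regular sequence has precisely this presentation.) This gives the claimed presentation of $H^*_{\tilT}(\Flag_G)\cong \calO(N(\frakt\times_{\ch}\frakt,\frakt\times\frakt))$ as a ring.

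Then I would pin down the grading. Under the isomorphism of Theorem~\ref{th:cohoFlag}, the $\AA^1$ factor is $\Spec H^*(\BB\GG_m^{rot})$, so $u$ has degree $2$; the two copies of $\frakt$ are $\Spec H^*(\BB T)$ and the first Chern classes $\xi_j,\eta_j$ have degree $2$; and $f_i$, being a fundamental invariant of degree $2d_i+2$ in $\Sym(\frakt^*)$ with the normalization from~(\ref{eq:definefi}), acquires cohomological degree $2d_i+2$. Hence the relation $uv_i=f_i(\xi)-f_i(\eta)$ forces $\deg(v_i)=2d_i+2-2=2d_i$. Wait—I should double-check against the convention $\deg(f_i)=2d_i+2$ in~(\ref{eq:definefi}): yes, $2d_i+2-\deg(u)=2d_i$, consistent with the statement. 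Finally, the description of $\pi_{\Flag}$ is immediate from part (2) of Theorem~\ref{th:cohoFlag}: the morphism to $\AA^1\times\frakt\times\frakt$ is dual to the inclusion of $\calO_Y[u]=\CC[u,\xi,\eta]$ into the Rees algebra, which on points sends $(u,\xi,\eta,v)\mapsto(u,\xi,\eta)$.

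The only real subtlety is the regular-sequence claim used to get exactly the stated relations and no more. Everything else is bookkeeping with gradings and the definition of the normal-cone deformation. I would present the regularity of $f_1(\xi)-f_1(\eta),\dots,f_r(\xi)-f_r(\eta)$ in $\Sym(\frakt^*)\otimes\Sym(\frakt^*)$ as the main point: it follows because $\Sym(\frakt^*)$ is a finite free module over $\Sym(\frakt^*)^W=\CC[f_1,\dots,f_r]$ (Chevalley), so $\Sym(\frakt^*)\otimes\Sym(\frakt^*)$ is finite free over $\CC[f_1,\dots,f_r]\otimes 1$, whence the $f_i\otimes 1 - 1\otimes f_i$—being obtained from the obviously regular sequence $f_i\otimes 1$ by subtracting elements that are nilpotent modulo... no, rather by a standard flatness/base-change argument over $\ch\times\ch$—form a regular sequence defining the fiber product $\frakt\times_{\ch}\frakt$. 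This is exactly the fact that makes the Rees algebra have a clean presentation, and it is the step I expect to require the most care to state precisely.
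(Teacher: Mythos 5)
Your proposal is correct and follows exactly the route the paper intends: the paper states this proposition without proof as an immediate unwinding of the definition of the deformation to the normal cone from Theorem \ref{th:cohoFlag}, and your computation of the (extended) Rees algebra of the ideal $(f_i(\xi)-f_i(\eta))$ is that unwinding. The one substantive ingredient you supply --- that these $r$ elements form a regular sequence in $\Sym(\frakt^*)\otimes\Sym(\frakt^*)$, via freeness over $\CC[f_1,\dots,f_r]$, so the Rees algebra has no relations beyond $uv_i=f_i(\xi)-f_i(\eta)$ --- is exactly right and is the point the paper leaves implicit.
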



Specializing the above results to the linear subspace $u=0$, we get the analogous results for the $T$-equivariant cohomology of $\Flag_G$.

\begin{cor}\label{c:Teqcohoflag}
\begin{enumerate}
 \item[]
 \item We have a natural isomorphism
\begin{equation*} 
 \Spec H^*_T(\Flag_G)\cong NC_{\frakt\times_{\ch}\frakt/\frakt\times\frakt}
\end{equation*}
where $NC_{\frakt\times_{\ch}\frakt/\frakt\times\frakt}$ is the normal cone of $\frakt\times_{\ch}\frakt$ in $\frakt\times\frakt$.
 \item In terms of coordinates introduced in proposition \ref{p:flagexplicit}, we have
\begin{equation*}
 H^*_T(\Flag_G)=\CC[\xi,\eta,v]/(f_i(\xi)-f_i(\eta),i=1,\cdots,r).
\end{equation*}
\end{enumerate}
\end{cor}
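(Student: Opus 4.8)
The plan is to derive both assertions by specializing Theorem~\ref{th:cohoFlag} and Proposition~\ref{p:flagexplicit} along the homomorphism $\GG_m^{rot}\to 1$, that is, by setting the loop-rotation parameter $u$ to $0$. The key point is the ring isomorphism
\begin{equation*}
H^*_T(\Flag_G)\;\cong\;H^*_{\tilT}(\Flag_G)\otimes_{H^*(\BB\GG_m^{rot})}\CC\;=\;H^*_{\tilT}(\Flag_G)/(u),
\end{equation*}
where $H^*(\BB\GG_m^{rot})=\CC[u]$ acts on $\CC$ through $u\mapsto 0$. To see this, view $\Flag_G\times_{\tilT}\EE\tilT$ as a fibre bundle over $\BB\GG_m^{rot}$ with fibre $\Flag_G\times_T\EE T$. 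Since $\Flag_G$ is $T$-equivariantly formal --- by the same $I$-orbit argument used in the proof of Lemma~\ref{lem:verifyass} --- the cohomology $H^*_T(\Flag_G)$ of the fibre lives in even degrees, so the Leray--Serre spectral sequence of this bundle degenerates; hence $H^*_{\tilT}(\Flag_G)$ is free over $\CC[u]$, and its edge homomorphism --- which is restriction along $T\hookrightarrow\tilT$ and sends $u\mapsto 0$ --- identifies $H^*_{\tilT}(\Flag_G)/(u)$ with $H^*_T(\Flag_G)$ as graded rings.

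Granting this, assertion (2) is immediate: by Proposition~\ref{p:flagexplicit} one has $H^*_{\tilT}(\Flag_G)=\CC[u,\xi,\eta,v]/(f_i(\xi)-f_i(\eta)-uv_i)$, and reducing modulo $u$ gives $\CC[\xi,\eta,v]/(f_i(\xi)-f_i(\eta),\ i=1,\dots,r)$.

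For assertion (1), apply $\Spec$. By Theorem~\ref{th:cohoFlag}, $\Spec H^*_{\tilT}(\Flag_G)\cong N(\frakt\times_{\ch}\frakt,\frakt\times\frakt)$, compatibly with $\pi_0$ (the $\GG_m^{rot}$-action, which is the coordinate $u$) on the left and the structure morphism $N(\frakt\times_{\ch}\frakt,\frakt\times\frakt)\to\AA^1$ on the right. Taking the scheme-theoretic fibre over $0\in\AA^1$: on the cohomology side this is $\Spec\bigl(H^*_{\tilT}(\Flag_G)/(u)\bigr)=\Spec H^*_T(\Flag_G)$ by the isomorphism above, while on the other side it is, by the very definition of the deformation to the normal cone, the normal cone $NC_{\frakt\times_{\ch}\frakt/\frakt\times\frakt}$. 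Equivalently, one checks directly that the ring in (2) is the associated graded ring $\bigoplus_{n\ge0}\mathcal{I}^n/\mathcal{I}^{n+1}$ of the ideal $\mathcal{I}\subset\CC[\xi,\eta]$ of $\frakt\times_{\ch}\frakt$: the generators $g_i:=f_i(\xi)-f_i(\eta)$ form a regular sequence, since by Chevalley's theorem $\CC[\xi,\eta]$ is free over the polynomial ring $\CC[f_1(\xi),\dots,f_r(\xi),\eta_1,\dots,\eta_r]$, in which $g_1,\dots,g_r$ become part of a regular system of parameters after the triangular coordinate change $f_i(\xi)\mapsto g_i$.

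The only step that is not completely formal is the isomorphism $H^*_T(\Flag_G)\cong H^*_{\tilT}(\Flag_G)/(u)$, i.e.\ that restriction of equivariant cohomology along $T\hookrightarrow\tilT$ is literally reduction modulo $u$ without $\mathrm{Tor}$-corrections; this is precisely where equivariant formality of $\Flag_G$ is used, and without it the specialization could fail to be flat. Alternatively one could rerun the $\epsilon_1,\epsilon_2$ argument in the proof of Theorem~\ref{th:cohoFlag} with $G\times\GG_m^{rot}$ and $\tilT$ replaced by $G$ and $T$, but the specialization is shorter. Everything else is a formal consequence of Proposition~\ref{p:flagexplicit} and the definition of the normal cone.
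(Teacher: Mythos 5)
Your proposal is correct and follows the same route as the paper, which obtains the corollary precisely by specializing Theorem~\ref{th:cohoFlag} and Proposition~\ref{p:flagexplicit} to $u=0$. You merely make explicit the steps the paper leaves implicit --- that equivariant formality gives freeness of $H^*_{\tilT}(\Flag_G)$ over $\CC[u]$ so that reduction mod $u$ computes $H^*_T(\Flag_G)$, and that the $f_i(\xi)-f_i(\eta)$ form a regular sequence so the fibre at $0$ of the deformation is the normal cone --- both of which are accurate.
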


Further specializing to $\xi=0$, we get the ordinary cohomology of $\Flag_G$.

\begin{cor}\label{c:ordcohoflag}
$$H^*(\Flag_G)=\CC[\eta,v]/(f_i(\eta),i=1,\cdots,r)\cong H^*(f\ell_G)\otimes H^*(\Grass_G).$$
\end{cor}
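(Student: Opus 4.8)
The plan is to obtain the ring presentation by a pure specialization argument from Proposition \ref{p:flagexplicit}, and then to identify the resulting ring with the stated tensor product by dimension count and comparison of generators. First I would recall that ordinary cohomology is recovered from $\tilT$-equivariant cohomology by setting the equivariant parameters to zero: concretely, $H^*(\Flag_G)\cong \CC\otimes_{H^*(\BB\tilT)} H^*_{\tilT}(\Flag_G)$, which is legitimate here since $\Flag_G$ is equivariantly formal (Lemma \ref{lem:verifyass}), so $H^*_{\tilT}(\Flag_G)$ is free over $H^*(\BB\tilT)=\CC[u]\otimes\Sym(\frakt^*)$. In the coordinates of Proposition \ref{p:flagexplicit}, the equivariant parameters are exactly $u$ and $\xi=(\xi_1,\dots,\xi_r)$ (the latter spanning $\frakt^*$), while $\eta$ and $v$ are the ``geometric'' generators. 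Setting $u=0$ first gives Corollary \ref{c:Teqcohoflag}; further setting $\xi=0$ kills $f_i(\xi)$ in every relation, leaving
\[
H^*(\Flag_G)=\CC[\eta,v]\big/\big(f_i(\eta):i=1,\dots,r\big),
\]
which is the first displayed equality.

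For the tensor-product identification, I would note that the right-hand side splits cleanly: $\CC[\eta]/(f_i(\eta))$ is by the classical Borel presentation precisely $H^*(f\ell_G)=H^*(G/B)$, since $f_1,\dots,f_r$ are a regular sequence of fundamental $W$-invariants in $\Sym(\frakt^*)$; and $\CC[v_1,\dots,v_r]$ with $\deg v_i=2d_i$ is a free polynomial algebra on $r$ generators. On the other hand, specializing Bezrukavnikov--Finkelberg (Theorem \ref{th:cohoGr}) to $u=0$ and one of the two $\ch$-factors set to zero identifies $H^*(\Grass_G)$ with the normal cone of the diagonal degenerated all the way, namely $\CC[v_1,\dots,v_r]$ with $\deg v_i=2d_i$ — equivalently, $H^*(\Grass_G)\cong H^*(\Omega K)$ is polynomial on generators in those degrees. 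Hence the obvious map $\CC[\eta]\otimes\CC[v]\to\CC[\eta,v]/(f_i(\eta))$ factors through $H^*(f\ell_G)\otimes H^*(\Grass_G)$ and is manifestly surjective; it is then an isomorphism because both sides have the same Poincar\'e series, namely $\prod_i \frac{1-t^{2d_i}}{1-t^2}\cdot\prod_i\frac{1}{1-t^{2d_i}}=\prod_i\frac{1}{1-t^2}$ in the ``even'' variable, matching the Poincar\'e series of $\CC[\eta,v]/(f_i(\eta))$ computed directly from the complete intersection presentation.

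An alternative, perhaps cleaner, route for the last identification is to invoke the projection $p_G:\Flag_G\to\Grass_G$ of (\ref{eq:fltogr}) with fiber $f\ell_G=G/B$: since $\Grass_G$ is simply connected and $H^*(f\ell_G)$ is concentrated in even degrees, the Leray--Hirsch theorem applies once one exhibits classes on $\Flag_G$ restricting to a basis of $H^*(f\ell_G)$ on the fiber — and the classes $\eta_1,\dots,\eta_r$ do exactly this, by construction of the presentation and the fact that $\xi=0$, $v=0$ on the fiber. This yields $H^*(\Flag_G)\cong H^*(\Grass_G)\otimes H^*(f\ell_G)$ as an $H^*(\Grass_G)$-module, and matching it with the ring presentation above upgrades it to a ring isomorphism. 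I expect the only genuine subtlety to be bookkeeping: making sure the ``specialization to $u=0$, then $\xi=0$'' is taken in the correct order compatible with the gradings (so that the degrees $\deg v_i=2d_i$ survive correctly), and that one correctly identifies $\CC[v_1,\dots,v_r]$ with $H^*(\Grass_G)$ — this last point is where I would lean on Theorem \ref{th:cohoGr} rather than reprove Bott's computation of $H^*(\Omega K)$. Everything else is a routine Poincar\'e-series comparison.
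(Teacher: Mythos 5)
Your proposal is correct and follows the paper's own (essentially unstated) argument: the presentation is obtained by specializing Proposition \ref{p:flagexplicit} first at $u=0$ (giving Corollary \ref{c:Teqcohoflag}) and then at $\xi=0$, and your alternative Leray--Hirsch route is precisely what the paper's remark after the corollary alludes to. One small slip in your verification: since $\deg f_i = 2d_i+2$, the Poincar\'e series of $\CC[\eta]/(f_i(\eta))$ is $\prod_i\frac{1-t^{2d_i+2}}{1-t^2}$ rather than $\prod_i\frac{1-t^{2d_i}}{1-t^2}$, so the product with $\prod_i\frac{1}{1-t^{2d_i}}$ does not telescope to $\prod_i\frac{1}{1-t^2}$ --- but the two Poincar\'e series you need to compare do still agree once the degrees are corrected, so the argument goes through.
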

\begin{remark}
The above isomorphism can also be deduced from the degeneracy of the Leray spectral sequence associated to the fibration $p_G: \Flag_G\to\Grass_G$ at $E_2$-term.
\end{remark}


\section{Comparison of two descriptions and more symmetries on $H^*_{\tilT}(\Flag_G)$}\label{s:compsym}

In this section, we first compare the two descriptions of the equivariant cohomology ring of $\Flag_G$ given by theorem \ref{th:cone} and theorem \ref{th:cohoFlag}. We then explore more symmetries of $\Flag_G$ to restate theorem \ref{th:flagarr} in a more intrinsic way. In section \ref{ss:linearinquadric}, we will study the fixed point arrangement from the perspective of classical geometry of quadrics .

\begin{prop}\label{p:totaltocone}
Under the isomorphisms in theorem \ref{th:cone} and theorem \ref{th:cohoFlag}, the natural projection
\begin{equation*}
\Pi: N(\frakt\times_{\ch}\frakt,\frakt\times\frakt)\cong\Spec H^*_{\tilT}(\Flag_G)\to \Spec \widehat{H}^*_{\tilT}(\Flag_G)\cong Q
\end{equation*}
is given by
\begin{equation*}
\Pi(u,\xi,\eta,v)=(u,\xi,\eta,2v_1).
\end{equation*}
In particular,
\begin{equation*}
\pi_{\Flag}=\pi_Q\circ\Pi.
\end{equation*}
\end{prop}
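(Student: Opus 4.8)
The plan is to compute the restriction map $\widehat{H}^*_{\tilT}(\Flag_G)\hookrightarrow H^*_{\tilT}(\Flag_G)$ dually, i.e. to identify the closed embedding $Q=\Spec\widehat{H}^*_{\tilT}(\Flag_G)\hookleftarrow\Spec H^*_{\tilT}(\Flag_G)$ in the two chosen coordinate systems. On the cohomology side, $\widehat{H}^*_{\tilT}(\Flag_G)$ is generated by degree $2$, so it is generated by $u,\xi,\eta$ (in the coordinates of Proposition \ref{p:flagexplicit}) together with whatever degree-$2$ elements the ring $H^*_{\tilT}(\Flag_G)$ contains. The subtlety is the coordinate $v=(v_1,\dots,v_r)$: only $v_1$ has degree $2$ (since $d_1=1$ because $f_1$ is the quadratic Killing form), while $v_2,\dots,v_r$ have degree $2d_i>2$ and hence do not lie in $\widehat{H}^*$. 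So I expect $\widehat{H}^*_{\tilT}(\Flag_G)$ to be the subring generated by $u,\xi,\eta,v_1$, subject to the single relation coming from $i=1$ in Proposition \ref{p:flagexplicit}, namely $f_1(\xi)-f_1(\eta)=uv_1$, i.e. $|\xi|^2-|\eta|^2=uv_1$. Comparing with the quadric equation $q(u,\xi,\eta,v)=|\xi|^2-|\eta|^2-2uv=0$ of Theorem \ref{th:cone} forces the last coordinate on $Q$ to be $2v_1$. This gives $\Pi(u,\xi,\eta,v)=(u,\xi,\eta,2v_1)$, and then $\pi_Q\circ\Pi(u,\xi,\eta,v)=(u,\xi,\eta)=\pi_{\Flag}(u,\xi,\eta,v)$ by Proposition \ref{p:flagexplicit}, which is the desired identity.

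The steps, in order. First, record that with the normalization $f_1=|-|^2$ of Section \ref{eq:definefi} one has $\deg v_1=2$ and $\deg v_i>2$ for $i\geq 2$, so that the degree-$\leq 2$ part of $H^*_{\tilT}(\Flag_G)$ is spanned by $1,u,\xi_1,\dots,\xi_r,\eta_1,\dots,\eta_r,v_1$. Second, argue that these elements generate $\widehat{H}^*_{\tilT}(\Flag_G)$ and that the only relation among them in degrees generated by degree $2$ is the $i=1$ relation $|\xi|^2-|\eta|^2=uv_1$; here I would use the presentation of $H^*_{\tilT}(\Flag_G)$ in Proposition \ref{p:flagexplicit} as a complete intersection and note that the ideal of relations is generated by elements of degree $2d_i+2$, the smallest being degree $4$, which is exactly $|\xi|^2-|\eta|^2-uv_1$. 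Third, match this presentation against the presentation of $\widehat{H}^*_{\tilT}(\Flag_G)$ as $\CC[Q]$ from Theorem \ref{th:cone}: $\CC[Q]=\CC[u,\xi,\eta,v]/(|\xi|^2-|\eta|^2-2uv)$ with $v$ the fourth coordinate. The map $\CC[Q]\to H^*_{\tilT}(\Flag_G)$ must send $u\mapsto u$, $\xi\mapsto\xi$, $\eta\mapsto\eta$, and $v\mapsto$ (some degree-$2$ element) with $|\xi|^2-|\eta|^2=2uv$ in the target; combined with $|\xi|^2-|\eta|^2=uv_1$ this gives $u(2v-v_1)=0$ in $H^*_{\tilT}(\Flag_G)$, and since $u$ is a nonzerodivisor (it is part of a regular sequence in the complete intersection presentation, or: the total space $N(\frakt\times_{\ch}\frakt,\frakt\times\frakt)$ is flat over $\AA^1_u$) we conclude $v\mapsto 2v_1$. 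Dually this is exactly $\Pi(u,\xi,\eta,v)=(u,\xi,\eta,2v_1)$. Fourth, compose with $\pi_Q=(\pi_0,\pi_1,\pi_2)$, which by \eqref{eq:piQ} reads off the first three coordinates, to get $\pi_Q\circ\Pi=(u,\xi,\eta)=\pi_{\Flag}$.

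The main obstacle is pinning down the normalization constant $2$ — equivalently, being careful that the degree-$2$ generator $v_1$ of $H^*_{\tilT}(\Flag_G)$ maps to $2v$ and not to $v$ under the identification with $\CC[Q]$. This requires knowing the quadric equation of Theorem \ref{th:cone} on the nose (with its coefficient $-2uv$) and the complete-intersection relation of Proposition \ref{p:flagexplicit} on the nose (with $f_1=|-|^2$, giving $|\xi|^2-|\eta|^2=uv_1$), and then using that $u$ is a nonzerodivisor to cancel it. Everything else is bookkeeping: identifying which generators have degree $2$, invoking that the relation ideal is generated in degrees $2d_i+2\geq 4$ so the degree-$\leq 3$ part of $H^*_{\tilT}(\Flag_G)$ agrees with the free algebra, and reading off $\pi_Q$ from its definition.
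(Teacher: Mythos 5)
Your handling of the fourth coordinate --- comparing the quadric relation of Theorem \ref{th:cone} with the $i=1$ relation of Proposition \ref{p:flagexplicit} and cancelling $u$, which is a nonzerodivisor because the deformation to the normal cone is flat over $\AA^1_u$ --- is sound, and is essentially how the paper disposes of that coordinate (``the $v$-coordinate is determined by $(u,\xi,\eta)$ by theorem \ref{th:cone}''). The genuine gap is earlier: you assert that the comparison map ``must send $u\mapsto u$, $\xi\mapsto\xi$, $\eta\mapsto\eta$'', and that assertion is the actual content of the proposition. The two coordinate systems arise in completely different ways: in Theorem \ref{th:cone} the coordinates on $Q\subset H^{\tilT}_2(\Flag_G)$ are read off through $\psi_*^{-1}$, where $\psi_*$ is characterized in Theorem \ref{th:flagarr} by $\tilW$-equivariance together with $\mu_{x_0}(u,\xi)=\psi_*(u,\xi,\xi,0)$; in Proposition \ref{p:flagexplicit} they come from $\pi_{\Flag}=(\pi_0,\pi_1,\pi_2)$, i.e.\ from the loop rotation and the left and right $I$-quotient structures. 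A bare comparison of ring presentations does not force the generator called $\eta$ on one side to correspond to the one called $\eta$ on the other: $\eta\mapsto-\eta$, or swapping $\xi$ and $\eta$, also preserves the quadric relation, and Proposition \ref{p:ultrah2} shows that even after imposing full $\tilW_{(2)}$-equivariance a sign ambiguity survives. So three of the four coordinate identifications are being assumed rather than proved.

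The paper's proof is devoted to exactly this point: $(u,\xi)$ match because $\Pi$ is compatible with the left $\tilT$-equivariant structure; for $\eta$, one observes that $\pi_2$ comes from right translations and is therefore invariant under the $\tilW$-action $\rho_1$ (which comes from left translations), hence factors through the $\tilW$-coinvariants of $H^{\tilT}_2(\Flag_G)$, which by Theorem \ref{th:flagarr} are $\CC d\oplus\frakt_\eta$; the normalization is then pinned down by the direct computation $\pi_{\Flag}(\mu_{x_0}(u,\xi))=(u,\xi,\xi)$ at the base point. You need some version of this argument. Two smaller points: (a) to identify $\widehat{H}^*_{\tilT}(\Flag_G)$ with $\CC[u,\xi,\eta,v_1]/(f_1(\xi)-f_1(\eta)-uv_1)$ you should also rule out extra relations produced by eliminating $v_2,\dots,v_r$ (for instance by noting that both sides are domains of the same dimension and the projection is dominant, or by localizing at $u\neq0$); (b) your last deduction is a non sequitur as written: from $u(2v-v_1)=0$ and $u$ a nonzerodivisor you obtain $v\mapsto v_1/2$, not $v\mapsto 2v_1$, so the constant should be rechecked against the normalizations in (\ref{eq:quadric}) and Proposition \ref{p:flagexplicit}.
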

\begin{proof}
The natural projection $\Pi$ is $\tilW$-equivariant and compatible with the left $\tilT$-equivariant structure, therefore the $(u,\xi)$-coordinates do not change. We proceed to show that the $\eta$-coordinate of theorem \ref{th:flagarr} coincides with the map $\pi_2$
\begin{equation*}
\pi_2: H^{\tilT}_2(\Flag_G)\to \frakt.
\end{equation*}
induced from $\pi_2$ in theorem \ref{th:cohoFlag}(2).
Since the $\tilW$-action comes from the left translations on $G(F)$ and $\pi_2$ comes from right translations on $G(F)$, the map $\pi_2$ is $\tilW$-invariant. In other words, $\pi_2$ factors through the coinvariants of $H^{\tilT}_2(\Flag_G)$ under the $\tilW$-action. Using theorem \ref{th:flagarr}(1), it is easy to see that the coinvariants of $H^{\tilT}_2(\Flag_G)$ under the $\tilW$-action are $\CC d\oplus\frakt_\eta$. Therefore, $\pi_2$ has the form
\begin{equation}\label{eq:factorpi2}
\pi_2=\pi_2'\circ(u,\eta): H^{\tilT}_2(\Flag_G)\stackrel{(u,\eta)}{\longrightarrow}\CC d\oplus\frakt\stackrel{\pi_2'}{\to}\frakt.
\end{equation}
On the other hand, the restriction of $\pi_{\Flag}=(\pi_0,\pi_1,\pi_2)$ to the subspace $V_{x_0}=\mu_{x_0}(\tilt)$
\begin{equation*}
\pi_{\Flag}\circ\mu_{x_0}:\tilt=H^{\tilT}_2(\{x_0\})=H^{\GG_m^{rot}}_2([T\backslash T/T])\to \CC d\oplus\frakt\to \CC\oplus\frakt\oplus\frakt
\end{equation*}
has the form
\begin{equation*}
\pi_{\Flag}\left(\mu_{x_0}(u,\xi)\right)=(u,\xi,\xi).
\end{equation*}
by direct computation. Plugging this into (\ref{eq:factorpi2}) we get
\begin{equation*}
\xi=\pi_2\left(\mu_{x_0}(u,\xi)\right)=\pi_2(u,\xi,\xi,0)=\pi_2'(u,\xi)
\end{equation*}
which shows that $\pi_2=\eta$. Finally the $v$-coordinate is determined by $(u,\xi,\eta)$ by theorem \ref{th:cone}. This completes the proof.
\end{proof}

\begin{remark}
In fact, the quadric $Q$ can be viewed as $N(Q_1,\frakt\times\frakt)$, where $Q_1$ is the quadric in $\frakt	\times\frakt$ defined by $|\xi|^2=|\eta|^2$. The above map becomes the natural map
$$N(\frakt\times_{\ch}\frakt,\frakt\times\frakt)\to N(Q_1,\frakt\times\frakt)$$
induced by the inclusion $\frakt\times_{\ch}\frakt\hookrightarrow Q_1$.
\end{remark}

\subsection{The convolution and comultiplication} We have a convolution diagram for $[I\backslash G(F)/I]$ as follows:
\begin{equation}\label{eq:flagconv}
\xymatrix{ & [I\backslash G(F)\stackrel{I}{\times}G(F)/I]\ar[dl]_{p_1}\ar[dr]^{p_2}\ar[rr]^{m} & & [I\backslash G(F)/I]\\
[I\backslash G(F)/I] & & [I\backslash G(F)/I]}
\end{equation}
where $m$ is induced by the multiplication of $G$. The rotation torus $\GG_m^{rot}$ acts on this diagram. This induces a comultiplication on $H^*_{\GG_m^{rot}}([I\backslash G(F)/I])$
\begin{equation}\label{eq:comult}
\Delta: H^*_{\GG_m^{rot}}([I\backslash G(F)/I])\to H^*_{\GG_m^{rot}}([I\backslash G(F)/I])\otimes_{H^*(\BB\tilT)} H^*_{\GG_m^{rot}}([I\backslash G(F)/I])
\end{equation}
where tensor product on the right hand uses the right $H^*(\BB\tilT)$-module structure of the first term and the left $H^*(\BB\tilT)$-module structure of the second term.

Taking spectra of the $\GG_m^{rot}$-equivariant cohomology of each term in the diagram (\ref{eq:flagconv}), we get
\begin{equation}\label{eq:specconv}
\xymatrix{ & N((\frakt/\ch)^3,\frakt^3)\ar[dl]_{p_{12}}\ar[dr]^{p_{23}}\ar[rr]^{p_{13}} & & N((\frakt/\ch)^2,\frakt^2)\\
N((\frakt/\ch)^2,\frakt^2) & & N((\frakt/\ch)^2,\frakt^2)}
\end{equation}
where $(X/Y)^n$ means the $n^\textup{th}$ self-fiber product of $X$ over $Y$.

With these structures, it is easy to show
\begin{prop}\label{p:groupoid}
\begin{enumerate}
 \item[]
 \item The affine scheme
$$\Spec H^*_{\GG_m^{rot}}([I\backslash G(F)/I])\cong N((\frakt/\ch)^2,\frakt^2)$$
admits a unique groupoid structure over $\AA^1\times\frakt$ with source map $(\pi_0,\pi_1)$, target map $(\pi_0,\pi_2)$ (in the notation of (\ref{eq:projflag})) and multiplication map induced by $\Delta$ in (\ref{eq:comult})
 \item the identity section of the groupoid is given by $\widetilde{\mu}_{x_0}$.
 \item the inversion map $\iota$ of the groupoid is given by
\begin{equation}\label{eq:defineiota}
\iota(u,\xi,\eta,v_i)=(u,\eta,\xi,-v_i).
\end{equation}
\end{enumerate}
\end{prop}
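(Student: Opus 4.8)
The plan is to verify the three groupoid axioms directly, using the explicit coordinate descriptions already at our disposal: the presentation of $H^*_{\tilT}(\Flag_G)$ from proposition \ref{p:flagexplicit}, the identification $\Spec H^*_{\tilT}(\Flag_G)\cong N((\frakt/\ch)^2,\frakt^2)$ with projection $\pi_{\Flag}=(\pi_0,\pi_1,\pi_2)=(u,\xi,\eta)$, and the formula $\mu_{x_0}(u,\xi)=\psi_*(u,\xi,\xi,0)$ for the identity section. The key external input is that the convolution diagram \eqref{eq:flagconv} is a groupoid \emph{object} in stacks over $\BB\GG_m^{rot}$ — that is, the associativity, unit, and inverse relations already hold at the level of the correspondences, since they come from the group multiplication in $G(F)$ — so all we must check is that passing to $\GG_m^{rot}$-equivariant cohomology and then to spectra preserves these relations, and that the resulting structure maps have the claimed form.

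First I would establish (1). The source and target maps $(\pi_0,\pi_1)$ and $(\pi_0,\pi_2)$ to $\AA^1\times\frakt$ are, on the cohomology side, exactly the two $H^*(\BB\tilT)$-module structures (left and right) used to form the tensor product in the comultiplication \eqref{eq:comult}; taking $\Spec$ turns $\otimes_{H^*(\BB\tilT)}$ into a fiber product over $\AA^1\times\frakt$, so $\Delta$ induces a morphism $N((\frakt/\ch)^2,\frakt^2)\times_{\AA^1\times\frakt}N((\frakt/\ch)^2,\frakt^2)\to N((\frakt/\ch)^2,\frakt^2)$, which I would match up with $p_{13}$ in \eqref{eq:specconv}. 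Associativity and the unit property then follow from the corresponding coherences for the diagram \eqref{eq:flagconv} — concretely, from the standard compatibilities among $p_1,p_2,m$ and the diagonal on $G(F)$ — by applying the (contravariant, monoidal) functor ``$\GG_m^{rot}$-equivariant cohomology then $\Spec$''. Uniqueness of the groupoid structure with the prescribed source, target, and multiplication is automatic.

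For (2), I would simply observe that the identity section of the convolution, on the space level, is the map $[I\backslash G(F)/I]\to[I\backslash G(F)\stackrel{I}{\times}G(F)/I]$ inserting the unit of $G$; passing to equivariant cohomology and $\Spec$, this becomes the section of $(\pi_0,\pi_1,\pi_2)$ over the diagonal which, by the computation $\pi_{\Flag}(\mu_{x_0}(u,\xi))=(u,\xi,\xi)$ carried out in the proof of proposition \ref{p:totaltocone}, is exactly $\widetilde\mu_{x_0}$. For (3), the inversion on the space level is induced by $g\mapsto g^{-1}$ on $G(F)$, which swaps the left and right $I$-quotients, hence on $\Spec$ swaps $\pi_1$ and $\pi_2$, i.e. swaps $\xi$ and $\eta$; that it negates each $v_i$ can be read off from the relation $f_i(\xi)-f_i(\eta)=uv_i$ in proposition \ref{p:flagexplicit} — swapping $\xi\leftrightarrow\eta$ and keeping $u$ forces $v_i\mapsto -v_i$ — together with $\tilW$- and $\tilT$-equivariance to pin down that no further correction in the $(u,\xi,\eta)$-coordinates occurs. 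Alternatively one verifies $\iota^2=\id$ and $m\circ(\id,\iota)=$ identity section directly from \eqref{eq:defineiota} and the formula for $\Delta$.

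The main obstacle is bookkeeping rather than any genuine difficulty: one must be careful that the monoidal structure on equivariant cohomology (the $H^*(\BB\tilT)$-bimodule structures, and the base changes $\epsilon_1,\epsilon_2$ appearing in the proof of theorem \ref{th:cohoFlag}) is compatible with the three projections $p_1,p_2,m$ in \eqref{eq:flagconv}, so that the comultiplication $\Delta$ really does correspond to $p_{13}$ in \eqref{eq:specconv} and not to some twist of it; and one must check that the deformation-to-the-normal-cone construction $N(-,-)$ commutes with the relevant fiber products (which is where the identification $\frakt\times_\ch N(\Delta(\ch),\ch\times\ch)\times_\ch\frakt\cong N(\frakt\times_\ch\frakt,\frakt\times\frakt)$ from the proof of theorem \ref{th:cohoFlag} is reused). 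Once these compatibilities are in place, the groupoid axioms are inherited for free from those of the groupoid object \eqref{eq:flagconv} in stacks, and the explicit formulas for the unit and inversion are the short computations indicated above.
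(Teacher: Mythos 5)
The paper offers no proof of this proposition at all (it is prefaced only by ``With these structures, it is easy to show''), so there is nothing to match your argument against line by line; your overall strategy is surely the intended one. Parts (1) and (2) are fine: $[I\backslash G(F)/I]\cong \BB I\times_{\BB G(F)}\BB I$ is the \v{C}ech groupoid of $[\cdot/I]\to[\cdot/G(F)]$, the diagram (\ref{eq:flagconv}) is its triple fiber product, and the only real content is the K\"unneth-type statement that the arrow $H^*\otimes_{H^*(\BB\tilT)}H^*\to H^*$ of the convolution space is an isomorphism, so that $\Spec$ converts $\Delta$ into $p_{13}$ of (\ref{eq:specconv}); you correctly isolate this as the point to check, and it follows by the same Leray--Serre argument as for $\epsilon_1,\epsilon_2$ in the proof of theorem \ref{th:cohoFlag}. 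Your identification of the unit with $\widetilde{\mu}_{x_0}$ via $\pi_{\Flag}\circ\mu_{x_0}(u,\xi)=(u,\xi,\xi)$ is also what the paper's setup gives.

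The one step I would not let stand is your derivation of (3). You identify the groupoid inversion with the involution induced by $g\mapsto g^{-1}$ and assert that this swaps the two $I$-quotient structures without signs. The paper explicitly contradicts this identification: in the subsection immediately following the proposition it names the involution induced by $g\mapsto g^{-1}$ as $\tau$, computes $\tau(u,\xi,\eta,v_i)=(u,-\eta,-\xi,(-1)^{d_i}v_i)$, and remarks that $\tau$ \emph{differs} from $\iota$ by signs. So either your sign analysis of $g\mapsto g^{-1}$ or the paper's formula for $\tau$ is off, and in either case you cannot both route the proof of (3) through $g\mapsto g^{-1}$ and land on the sign-free formula (\ref{eq:defineiota}) without confronting this. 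Note also that the relation $f_i(\xi)-f_i(\eta)=uv_i$ alone does not force $v_i\mapsto -v_i$: since $f_i$ is homogeneous of degree $d_i+1$, the paper's signed $\tau$ preserves the relation just as well. The robust derivation of (3) avoids $g\mapsto g^{-1}$ entirely and is essentially your fallback: once (1) and (2) hold, the inversion is the unique map satisfying $s\circ\iota=t$, $t\circ\iota=s$ and $m\circ(\id\times\iota)\circ\Delta_{\mathrm{diag}}=e\circ s$; the first two conditions force $\iota^*u=u$, $\iota^*\xi=\eta$, $\iota^*\eta=\xi$ with no room for signs, and then $u(\iota^*v_i+v_i)=\iota^*(f_i(\xi)-f_i(\eta))+(f_i(\xi)-f_i(\eta))=0$ gives $\iota^*v_i=-v_i$ because $u$ is a nonzerodivisor on the deformation space (flatness over $\AA^1_u$ is built into $N(-,-)$). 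I would replace the $g\mapsto g^{-1}$ argument by this, and treat the comparison with $\tau$ as a separate issue to be reconciled with the paper.
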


\subsection{Another involution} The stack $[I\backslash G(F)/I]$ admits an involution $\tau$ induced by $g\mapsto g^{-1}$. It is easy to see that $\tau$ induces an involution (also denoted by $\tau$) on $H^*_{\GG_m^{rot}}([I\backslash G(F)/I])$ by
\begin{equation}\label{eq:inv}
\tau(u,\xi,\eta,v_i)=(u,-\eta,-\xi,(-1)^{d_i}v_i).
\end{equation}
Note that $\tau$ differs from $\iota$ in (\ref{eq:defineiota}) by signs.

\subsection{Affine Weyl group actions}\label{ss:Weylaction}
Recall that by lemma \ref{lem:twistequiv}, $\tilW$ acts on $H^*_{\tilT}(\Flag_G)$. Let us denote this action by $\rho_1$. In terms of coordinates in \ref{p:flagexplicit}, we have for $\tilw=(\lambda,w)$
\begin{equation}\label{eq:leftaction}
\rho_1(\tilw)(u,\xi,\eta,v_i)=\left(u,w\xi+u\lambda,\eta,v_i+\frac{1}{u}(f_i(w\xi+u\lambda)-f_i(\xi))\right)
\end{equation}
Using the involution $\iota$, we get another action $\rho_2=\iota\rho_1\iota$ of $\tilW$ on $H^*_{\tilT}(\Flag_G)$ via
\begin{equation}\label{eq:rightaction}
\rho_2(\tilw)(u,\xi,\eta,v_i)=\left(u,\xi,w\eta+u\lambda,v_i-\frac{1}{u}(f_i(w\eta+u\lambda)-f_i(\eta))\right).
\end{equation}
These two actions commute with each other.

Define the group
\begin{equation}\label{eq:doubleW}
\tilW_{(2)}:=(\tilW\times\tilW)\rtimes\ZZ/2
\end{equation}
where $\ZZ/2$ is the group generated by the involution $\iota$ which acts on $\tilW\times\tilW$ by switching the two factors. Then we have an $\tilW_{(2)}$-action on $H^*_{\tilT}(\Flag_G)$ whose restriction on the first and second factors of $\tilW$ are $\rho_1$ and $\rho_2$ respectively.

Specializing to $\xi=0,u=0$, we get a $\tilW$-action on $H^*(\Flag_G)$ via $\rho_2$.
\begin{equation}\label{eq:actiononord}
\rho_2(\tilw)(\eta,v)=\left(w\eta,v_i-\partial_{w^{-1}\lambda}f_i(\eta)\right)=\left(w\eta,v_i-\partial_{\lambda}f_i(w\eta)\right).
\end{equation}
Note that the $\rho_1$-action on $H^*(\Flag_G)$ is trivial by lemma \ref{lem:twistequiv}.

\subsection{A more intrinsic description of $H_2^{\tilT}(\Flag_G)$}\label{ss:ultrah2} Consider the direct sum $\frakh\oplus\frakh$ of two copies of the affine Cartan algebra $\frakh$ with quadratic form $|x|^2-|y|^2$ and involution $(x,y)\mapsto(y,x)$ for $(x,y)\in \frakh\oplus\frakh$. The actions of $\tilW$ on the two copies of $\frakh$ and the involution assemble to a $\tilW_{(2)}$-action, which preserves the quadric form up to sign.

The vector $(K,K)\in\frakh_{\pm}$ is isotropic and fixed by the $\tilW_{(2)}$-action. We define
\begin{equation*}
\frakh_{(2)}:=(K,K)^{\bot}/\CC(K,K).
\end{equation*}
By construction, $\frakh_{(2)}$ inherits a quadratic form $(-|-)_2$ and a $\tilW_{(2)}$-action from those of $\frakh\oplus\frakh$. For each $\tilw\in\tilW$, the graph $\Gamma(\tilw)$ of $\tilw:\frakh\to\frakh$ is a subspace of $(K,K)^{\bot}$.

\begin{prop}\label{p:ultrah2}
\begin{enumerate}
 \item []
 \item Up to sign, there is a unique $\tilW_{(2)}$-equivariant isomorphism of quadratic spaces
\begin{equation*}
(\frakh_{(2)},(-|-)_2)\stackrel{\sim}{\to}(H_2^{\tilT}(\Flag_G),q)
\end{equation*}
where the $\tilW_{(2)}$-structure on $H_2^{\tilT}(\Flag_G)$ is defined in section \ref{ss:Weylaction}.
 \item Under any such isomorphism, the fixed point subspace $V_{\tilw\cdot x_0}$ corresponding to the fixed point $\tilw\cdot x_0$ is the image of the graph $\Gamma(\tilw^{-1})$ in $\frakh_{(2)}$.
\end{enumerate}
\end{prop}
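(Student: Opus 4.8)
The plan is to bootstrap everything from the explicit formula \eqref{eq:formulaformu} in Theorem \ref{th:flagarr}, which already gives $H_2^{\tilT}(\Flag_G)$ as the quadratic space $(\CC d\oplus\frakt\oplus\frakt\oplus\CC K, q)$ with $q$ as in \eqref{eq:quadric}, together with its $\tilW_{(2)}$-action recorded in \eqref{eq:leftaction}, \eqref{eq:rightaction}, \eqref{eq:defineiota}. So part (1) amounts to exhibiting a $\tilW_{(2)}$-equivariant isometry $\frakh_{(2)}\xrightarrow{\sim}(\CC d\oplus\frakt\oplus\frakt\oplus\CC K,q)$ and checking uniqueness up to sign; part (2) is then a direct comparison of subspaces.

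First I would write down the map explicitly. Using \eqref{eq:decomp}, a point of $\frakh\oplus\frakh$ has coordinates $(u_1,\xi_1,v_1;u_2,\xi_2,v_2)$, the quadratic form being $|x|^2-|y|^2$ where $|(u,\xi,v)|^2 = |\xi|^2 + 2uv$ by the normalization in section \ref{ss:kacmoody} (the $(d|K)=1$ convention). The vector $(K,K)$ corresponds to $(0,0,1;0,0,1)$; being isotropic and $\tilW_{(2)}$-fixed is immediate from \eqref{eq:weylaction} (the $\tilW$-action fixes $K$) and the fact that the swap fixes it. Its orthogonal complement $(K,K)^{\perp}$ inside $\frakh\oplus\frakh$ with respect to the form $|x|^2-|y|^2$ is cut out by $(K,K)\bot$-pairing, i.e.\ by $u_1 - u_2 = 0$ (using $(K|d)=1$ on each factor, with the sign difference). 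Hence on $(K,K)^\perp$ we have $u_1=u_2=:u$, and quotienting by $\CC(K,K)$ kills the difference $v_1-v_2$, so a section is given by $(u,\xi_1,\xi_2, v)\mapsto \overline{(u,\xi_1, v; u,\xi_2, 0)}$ with $v := v_1-v_2$. I claim the resulting map $(u,\xi_1,\xi_2,v)$ gives the isometry after matching with the $(u,\xi,\eta,v)$-coordinates of Theorem \ref{th:cone}: the induced form on $\frakh_{(2)}$ is $(|\xi_1|^2 + 2uv_1) - (|\xi_2|^2 + 2uv_2) = |\xi_1|^2 - |\xi_2|^2 + 2u(v_1-v_2) = |\xi_1|^2-|\xi_2|^2+2uv$, which is $q$ of \eqref{eq:quadric} up to the sign of $v$ (an innocuous relabelling, or absorbed by the "up to sign" in the statement). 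Equivariance is then just re-reading \eqref{eq:weylaction}: $\rho_1$ acts through the first $\frakh$-factor, $\rho_2$ through the second, and $\iota$ through the swap; I would check \eqref{eq:leftaction} matches \eqref{eq:weylaction} on the first factor term by term (the $v_i$-transformation in \eqref{eq:leftaction} for $i=1$, i.e.\ $\frac1u(f_1(w\xi+u\lambda)-f_1(\xi)) = \frac1u(|w\xi+u\lambda|^2 - |\xi|^2) = 2(w\xi|\lambda) + u|\lambda|^2$, agrees with $2(w\xi|\lambda)+u|\lambda|^2$ from \eqref{eq:weylaction} after accounting for the factor $2v_1$ versus $v$ appearing in Proposition \ref{p:totaltocone}). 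For uniqueness up to sign: $H^2_{\tilT}(\Flag_G)$ is spanned by the images of simple reflection cycle classes plus $H^2(\BB\tilT)$, and a $\tilW_{(2)}$-equivariant self-isometry of this irreducible-enough quadratic space must act by $\pm 1$ — I would argue this by noting that the only $\tilW_{(2)}$-fixed lines are $\CC\delta$ and that the form is nondegenerate, so Schur-type rigidity forces $\pm\id$.

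For part (2), the graph $\Gamma(\tilw^{-1})\subset\frakh\oplus\frakh$ is $\{(x, \tilw^{-1}x): x\in\frakh\}$; since $\tilw^{-1}$ fixes $K$, this graph lies in $(K,K)^\perp$, and its image in $\frakh_{(2)}$ is the set $\{(u,\xi, \operatorname{pr}_{\frakt}(\tilw^{-1}(u,\xi,v)), \,\ldots)\}$. Plugging \eqref{eq:weylaction} with $\tilw = (\lambda,w)$, hence $\tilw^{-1}=(-w^{-1}\lambda, w^{-1})$: the $\frakt$-component of $\tilw^{-1}(u,\xi,v)$ is $w^{-1}\xi - u w^{-1}\lambda = w^{-1}(\xi - u\lambda)$, and the $K$-component becomes $v - (w^{-1}\xi| -w^{-1}\lambda) - \frac u2|w^{-1}\lambda|^2 = v + (\xi|\lambda) - \frac u2|\lambda|^2$ (using $W$-invariance of the form); setting the free parameter $v=0$ in the section, the image is exactly $\{(u, \xi, w^{-1}(\xi-u\lambda), (\xi|\lambda)-\frac u2|\lambda|^2)\}$, which is precisely $V_{\tilw\cdot x_0} = \mu_{\tilw\cdot x_0}(\tilt)$ by \eqref{eq:formulaformu} (again modulo the $v \leftrightarrow 2v_1$ bookkeeping). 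So part (2) follows once part (1) is set up correctly.

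The main obstacle I anticipate is not any single computation but the consistent bookkeeping of three normalization conventions at once: the Killing-form normalization and the $(d|K)=1$ pairing from section \ref{ss:kacmoody}, the identification \eqref{eq:tilt}--\eqref{eq:tiltdual} of $\tilt$ with $\frakh/\CC K$ (so that the "$v$" of Theorem \ref{th:flagarr} is a coordinate transverse to $\frakt$, not literally the $\frakh^*$-coordinate), and the factor-of-$2$ discrepancy between the $v$-coordinate in \eqref{eq:quadric} and the $2v_1$ appearing in Proposition \ref{p:totaltocone}. I would handle this by fixing once and for all the section of $\frakh_{(2)}$ described above and computing the induced form and action in those coordinates, then matching to Theorem \ref{th:cone}'s coordinates by an explicit linear change of variables, absorbing any residual global sign into the "up to sign" clause. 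The uniqueness-up-to-sign claim is the only genuinely non-computational point, and there the key input is that $H^2_{\tilT}(\Flag_G)$ together with its $\tilW_{(2)}$-action has $\CC\delta$ (equivalently the line through $(K,-K)$... rather, through the common $d$-direction) as essentially its only equivariant line, which makes any equivariant isometry scalar, hence $\pm 1$ by the isometry condition.
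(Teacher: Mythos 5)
Your treatment of existence and of part (2) is essentially the paper's: pick the explicit section of $(K,K)^{\bot}\to\frakh_{(2)}$, compute the induced form, match coordinates with $\psi_*$, and for (2) push the graph $\Gamma(\tilw^{-1})$ through (\ref{eq:weylaction}) and compare with (\ref{eq:formulaformu}). Those computations are right (your sign on the $v$-coordinate just reflects choosing the section $v=v_1-v_2$ rather than the paper's $u(d,d)+(\xi,0)+(0,\eta)+v(0,K)$, which reproduces $q$ on the nose), and your awareness of the $v$ versus $v_1$ normalization is appropriate.

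The gap is in the uniqueness claim, which you yourself identify as the only non-computational step. Two problems. First, the invariant line is misidentified: $\CC\delta$ lives in $\frakh^*$, not in $\frakh_{(2)}$, and your corrected guess, the ``common $d$-direction'' $\CC(d,d)$, is \emph{not} $\tilW_{(2)}$-stable --- a translation $(\lambda,1)$ sends $(u,0,0,0)$ to $(u,u\lambda,0,*)$ by (\ref{eq:weylaction}). The unique $\tilW\times\tilW$-fixed line is the image of $\CC(0,K)$, on which $\iota$ acts by $-1$. Second, and more seriously, ``unique invariant line plus nondegenerate form implies equivariant isometries are $\pm\id$'' is not a valid Schur-type deduction here: $\frakh_{(2)}$ is far from irreducible as a $\tilW_{(2)}$-module (it carries the invariant flag $\CC(0,K)\subset\CC(0,K)\oplus\frakt\oplus\frakt\subset\frakh_{(2)}$), and for non-semisimple modules an equivariant endomorphism commuting with a unipotent-type action need not be scalar even when it preserves a nondegenerate form --- this has to be argued. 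The paper's actual argument is a several-step pinning-down: an equivariant $\alpha$ preserves the $\pm1$-eigenspaces of $\iota$, namely $\CC(d,d)\oplus\Delta^+(\frakt)$ and $\CC(0,K)\oplus\Delta^-(\frakt)$; restricting to the finite group $W\times W$ (whose action \emph{is} semisimple) and using that $\frakt$ is an irreducible $W$-module because $G$ is simple, $\alpha$ must preserve each copy of $\frakt$ and act on it by a scalar; intersecting with the $\iota$-eigenspaces then shows $\alpha$ is diagonal for the four-term decomposition, with one scalar per summand. Only then does one use the translation part of $\tilW$ --- which mixes $\CC(d,d)$ into $\frakt$ and $\frakt$ into $\CC(0,K)$ --- to force all four scalars to coincide, and finally the isometry condition to get $\pm1$. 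Some version of this argument is needed to close your proof.
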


\begin{proof}
(1) We have a natural decomposition
\begin{equation}\label{eq:decomph2}
\frakh_{(2)}=\CC(d,d)\oplus\frakt\oplus\frakt\oplus\CC(0,K)
\end{equation}
The sought-for isomorphism is the composition of $\psi_*$ with the isomorphism
\begin{eqnarray*}
\frakh_{(2)}=\CC(d,d)\oplus\frakt\oplus\frakt\oplus\CC(0,K) & \to & \CC d\oplus\frakt\oplus\frakt\oplus\CC K\\
u(d,d)+(\xi,0)+(0,\eta)+v(0,K) & \mapsto & (u,\xi,\eta,v)
\end{eqnarray*}
It is easy to check this isomorphism sends $(-|-)_{(2)}$ to $q$ and is $\tilW_{(2)}$-equivariant. This shows the existence part.

For the uniqueness, we only have to show that $\frakh_{(2)}$ does not have $\tilW_{(2)}$-equivariant orthogonal automorphisms other than $\pm1$. Let $\alpha$ be such an automorphism. Decomposing $\frakh_{(2)}$ into eigenspaces of the involution, we get
\begin{eqnarray*}
\frakh^+_{(2)}=\CC(d,d)\oplus\Delta^+(\frakt)\\
\frakh^-_{(2)}=\CC(0,K)\oplus\Delta^-(\frakt)
\end{eqnarray*}
where $\Delta^+(\frakt)$ and $\Delta^-(\frakt)$ are the diagonal and anti-diagonal in $\frakt\oplus\frakt$. These eigenspaces $\frakh^+_{(2)}$ and $\frakh^-_{(2)}$ must be preserved by $\alpha$. Moreover, if we look at the $W\times W$-action, we decide that each copy of $\frakt$ in $\frakh_{(2)}$ should be preserved by $\alpha$, and $\alpha$ acts on each copy by a scalar(because we assumed that $G$ is simple). These two facts together show that $\alpha$ perserves the decomposition of (\ref{eq:decomph2}) and acts on each direct summand by a scalar. It is not difficult to see that these scalars have to be the same in order for $\alpha$ to be $\tilW_{(2)}$-equivariant. Since $\alpha$ preserves the quadratic form $(-|-)_2$, the scalar has to be $\pm1$.

(2) follows from (1) and theorem \ref{th:flagarr}.
\end{proof}

\subsection{Linear subspaces in a quadric cone}\label{ss:linearinquadric}
By theorem \ref{th:cone}, the fixed point arrangement for $\Flag_G$ consists of maximal isotropic subspace with respect to the quadratic form $q$ in (\ref{eq:quadric}). Recall some general facts about such linear subspaces in a quadric cone. Let $(E,q)$ be a quadratic space over $\CC$. Assume $q$ is nondegenerate and $\dim E=2n$. Let $Q$ be the quadric cone in $V$ defined by $q=0$. Let $\Sigma$ be the scheme parametrizing all $n$-dimensional linear subspaces of $E$ which lie on $Q$. This is a closed subscheme of the Grassmannian $\Grass(E,n)$. It is a homogeneous space under the orthogonal group $O(E,q)$. It has two connected components $\Sigma^{\pm}$ (there is no canonical labelling of the two components) which are permuted by the two components of $O(E,q)$.

Back to our situation. Define $\Rig(\frakt)=\frakt\rtimes O(\frakt,(-|-))$ to be the group of rigid affine transformations on $\frakt$ with respect to the Killing form $(-|-)$. It acts on $H^{\tilT}_2(\Flag_G)$ via a similar formula as (\ref{eq:4termaction})
\begin{equation}\label{eq:rigaction}
(\lambda,S)\cdot(u,\xi,\eta,v)=\left(u,S\xi+u\lambda,\eta,v+(S\eta|\lambda)+\frac{u}{2}|\lambda|^2\right) 
\end{equation}
where $\lambda\in\frakt$ and $S\in O(\frakt,(-|-))$. Note that $\tilW\subset\Rig(\frakt)$, and the action $\rho_1$ of $\tilW$ coincides with the restriction from the $\Rig(\frakt)$-action.

\begin{prop}\label{p:linearinquadric}
\begin{enumerate}
 \item[]
 \item There is a unique dense open subset $U$ of $\Sigma$ which is a $\Rig(\frakt)$-torsor;
 \item If we call $\Sigma^+$ the component of $\Sigma$ containing $V_{x_0}$, then $V_{\tilw\cdot x_0}$ is contained in $\Sigma^+$ (resp. $\Sigma^-$) if $\ell(w)$ is even (resp. odd). 
\end{enumerate}
\end{prop}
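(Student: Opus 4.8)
The plan is to work directly with the quadric cone $Q = \{q = 0\}$ in the $2(r+1)$-dimensional space $\AA^1_d \times \frakt \times \frakt \times \AA^1_K$ and to use the explicit formula \eqref{eq:formulaformu} for the fixed-point subspaces. First I would verify that each $V_{\tilw\cdot x_0}$ is indeed a maximal isotropic subspace: it is $(r+1)$-dimensional (being the image of $\mu_{\tilw\cdot x_0}:\tilt\to H^{\tilT}_2(\Flag_G)$, which is injective since $\pi_1\circ\mu_{\tilw\cdot x_0}=\id$), and it lies on $Q$ by the computation already carried out in the proof of Theorem \ref{th:cone}. Hence each $V_{\tilw\cdot x_0}$ is a point of $\Sigma$, and I would single out those $V$ lying in the locus where $\pi_0 = u \neq 0$. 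For (1): the projection $\pi_Q = (\pi_0,\pi_1,\pi_2)$ of \eqref{eq:piQ} is birational, and I would show that the maximal isotropic subspaces meeting the open cell $\{u\neq0\}$ are exactly the graphs of rigid affine maps. Concretely, a maximal isotropic subspace $L$ with $\pi_0|_L$ surjective is the image of a section $\tilt = \CC d\oplus\frakt \to H^{\tilT}_2(\Flag_G)$ of $\pi_1$, i.e. $(u,\xi)\mapsto \psi_*(u,\xi,A(u,\xi),B(u,\xi))$ for linear maps $A,B$; imposing $q=0$ identically forces (after rescaling to $u=1$) $A$ to be an affine-orthogonal transformation $\xi\mapsto S\xi + \lambda$ with $S\in O(\frakt,(-|-))$ and $B$ to be determined by $A$ exactly as in \eqref{eq:rigaction}. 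Thus $U := \{L\in\Sigma : \pi_0|_L \text{ surjective}\}$ is precisely the $\Rig(\frakt)$-orbit of $V_{x_0}$, the action being free (an element fixing $V_{x_0}$ must fix $\xi\mapsto\xi$, forcing $S=1,\lambda=0$), so $U$ is a $\Rig(\frakt)$-torsor; it is open and dense in $\Sigma$ because its complement $\{\pi_0|_L = 0\}$ is a proper closed condition and $\Sigma$ has pure dimension $\binom{r+1}{2} = \dim\Rig(\frakt)$, matching on each component.

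For (2): the two components $\Sigma^\pm$ are distinguished by the $O(\frakt,(-|-))$-factor — more precisely, $U$ meets $\Sigma^+$ in the image of $\SO(\frakt)\ltimes\frakt$ and $\Sigma^-$ in the other coset, since $\SO$ is connected and the translation subgroup is connected, while the two cosets of $O/\SO$ land in different components of $\Sigma$ (this is the standard fact recalled in \S\ref{ss:linearinquadric} that the two families of maximal isotropics are swapped by $O\setminus\SO$). Now $V_{\tilw\cdot x_0} = \rho_1(\tilw)\cdot V_{x_0}$ by Corollary \ref{c:twistmu} and the identification $\rho_1|_{\tilW} \subset \Rig(\frakt)$-action noted after \eqref{eq:rigaction}; under the projection $\Rig(\frakt) = \frakt\rtimes O(\frakt,(-|-)) \twoheadrightarrow O(\frakt)/\SO(\frakt) = \{\pm1\}$ the element $\tilw = (\lambda,w)$ maps to $\det(w) = (-1)^{\ell(w)}$. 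Therefore $V_{\tilw\cdot x_0}$ lies in the same component as $V_{x_0}$ iff $\ell(w)$ is even, which is the claim. The only subtlety is that the length here is the length $\ell(w)$ of the finite Weyl group element $w\in W$ (equivalently, the parity of $\tilw$ under $\tilW\to W\to\{\pm1\}$), not the affine length $\ell(\tilw)$; I would make this explicit.

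The main obstacle I anticipate is the first half of (1): showing cleanly that \emph{every} maximal isotropic subspace meeting $\{u\neq0\}$ arises from a rigid affine transformation, and in particular that the quadratic-form condition $q\equiv 0$ on the graph forces the linear part $S$ to be genuinely orthogonal (rather than merely the constraint on the $v$-coordinate being satisfiable). This is a direct but slightly fiddly linear-algebra computation: writing the section as above, the vanishing of $q(u,\xi,S\xi+u\lambda, B(u,\xi)) = |\xi|^2 - |S\xi+u\lambda|^2 - 2u\,B(u,\xi)$ identically in $(u,\xi)$; the $u^0$-part gives $|\xi|^2 = |S\xi|^2$ modulo $u$, forcing $S\in O(\frakt)$, and the remaining terms then pin down $B$. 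The global statement (that $U$ is open dense, with the dimension count) is then formal from the homogeneity of $\Sigma$ under $O(E,q)$.
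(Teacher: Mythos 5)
Your part (2) and the overall strategy for part (1) match the paper's proof: identify the maximal isotropic subspaces lying in a suitable open cell of the Grassmannian with graphs of linear maps, check that isotropy forces such a graph to be a rigid affine transformation, and read off the component of $\Sigma$ from the determinant of the linear part (the paper, like you, uses $\det(w)=(-1)^{\ell(w)}$ for the finite part $w$ of $\tilw=(\lambda,w)$, and your remark that this is the finite length, not $\ell(\tilw)$, is correct and consistent with the statement).

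There is, however, a genuine gap in your definition of $U$. You set $U=\{L\in\Sigma:\pi_0|_L\ \text{surjective}\}$ and claim that any such $L$ is the image of a section of $\pi_1$, i.e.\ a graph over $\CC d\oplus\frakt_\xi$. These conditions are not equivalent once $r\ge 2$: being a graph over $\CC d\oplus\frakt_\xi$ means $L\cap(\frakt_\eta\oplus\CC K)=0$, which is strictly stronger than $u$ not vanishing identically on $L$. Concretely, for $r\ge 2$ pick a nonzero isotropic vector $\eta_0\in\frakt$; then $d=(1,0,0,0)$ and $w_0=(0,0,\eta_0,0)$ satisfy $q(d)=q(w_0)=0$ and $d\perp w_0$, so they span a $2$-dimensional isotropic subspace, which extends by Witt's theorem to some $L\in\Sigma$. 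This $L$ has $\pi_0|_L$ surjective, yet $\pi_1(w_0)=0$ with $w_0\neq 0$, so $L$ is not a graph and in particular not in the $\Rig(\frakt)$-orbit of $V_{x_0}$, all of whose members are graphs. Hence your $U$ strictly contains that orbit and cannot be a torsor. The repair is exactly what the paper does: take $U=U'\cap\Sigma$, where $U'$ is the Schubert cell of $(r+1)$-planes transversal to a fixed complementary subspace (the paper uses $\frakt_\xi\oplus\CC K$; transversality to $\frakt_\eta\oplus\CC K$ works equally well), identify $U'$ with a $\Hom$-space, and run your isotropy computation there: the $u=0$ part of $q\equiv 0$ forces the linear part $S$ to satisfy $|S\xi|^2=|\xi|^2$, hence $S\in O(\frakt,(-|-))$ by nondegeneracy, and the remaining terms determine the $K$-component as in (\ref{eq:rigaction}). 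With that corrected definition of $U$, your freeness check, dimension count, and the component argument in (2) go through unchanged.
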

\begin{proof}
(1) Consider the affine open subset $U'$ of $\Grass(H^{\tilT}_2(\Flag_G),r+1)$ consisting of $(r+1)$-dimensional subspaces transversal to $\frakt_\xi\oplus\CC K$. This is a Schubert cell in $\Grass(H^{\tilT}_2(\Flag_G),r+1)$ and we have a natural isomorphism
\begin{equation*}
U'\cong \Hom(\CC d\oplus\frakt_{\eta},\frakt_{\xi}\oplus\CC K)
\end{equation*}
The group $\Rig(\frakt)$ embeds into this $\Hom$-space by  
\begin{equation*}
(\lambda,S)\mapsto\left((u,\eta)\mapsto\left(S\eta+u\lambda,(S\xi|\lambda)+\frac{u}{2}|\lambda|^2\right)\right).
\end{equation*}
It is easy to check that the image of the embedding is the $\Rig(\frakt)$-orbit of $V_{x_0}\in U'$, and $U=U'\cap\Sigma$ coincide with this orbit. Therefore $U$ is a $\Rig(\frakt)$-torsor.

(2) The group $\Rig(\frakt)$ has two components, given by the determinant of the linear part. For $\tilw=(\lambda,w)\in\tilW$, this determinant is $(-1)^{\ell(w)}$. By (1), $\Rig(\frakt)$ permutes the two components $U\cap\Sigma^{\pm}$ via the determinant($=\pm 1$) of its linear part, and (2) is immediate.
\end{proof}


\section{Proof of theorem \ref{th:flagarr}}\label{s:proof}

This section contains the proof of theorem \ref{th:flagarr}. We need a technical lemma.

\begin{lemma}\label{lem:p1action}
For $i=0,\cdots,r$, there is a (not necessarily unique) $\tilT$-equivariant isomorphism
\begin{equation*}
\gamma_i:C_i\stackrel{\sim}{\longrightarrow}\PP^1
\end{equation*}
sending $x_0$ to $[0,1]$ and $s_ix_0$ to $[1,0]$. Here $\tilT$ acts on $\PP^1$ via the character $-\alpha_i$, i.e.,
\begin{equation*}
t\cdot[x,y]=[\alpha_i(t)^{-1}x,y],\forall t\in\tilT.
\end{equation*}
\end{lemma}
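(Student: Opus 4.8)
The plan is to understand $C_i$, the closure of the one-dimensional Iwahori orbit $Is_i\cdot x_0\subset\Flag_G$, very concretely as the image of a homomorphism from a copy of $\SL_2$ (or $\PGL_2$) associated to the simple affine root $\alpha_i$. First I would recall the standard structure theory (e.g. \cite{GKMFL}, section 14, or Kumar's book): for each simple affine root $\alpha_i$ there is a root homomorphism $\phi_i:\SL_2\to G(F)$ (an honest algebraic group homomorphism into the Kac--Moody group) whose image, together with $I$, generates the minimal parahoric $P_i$. The orbit closure $C_i=\overline{Is_i\cdot x_0}$ is exactly $P_i/I\cong \PP^1$, with $x_0$ the base point $eI/I$ and $s_ix_0$ the other $T$-fixed point. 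This gives the isomorphism $\gamma_i:C_i\xrightarrow{\sim}\PP^1$ on the nose; the only remaining issue is to normalize it so that $x_0\mapsto[0,1]$, $s_ix_0\mapsto[1,0]$, and to compute the $\tilT$-weight on the tangent space.

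Next I would pin down the two fixed points and the $\tilT$-action. The point $\PP^1=P_i/I$ has exactly two $T$-fixed points, corresponding to the two $I$-cosets $I$ and $\dot s_i I$ (where $\dot s_i$ is a lift of $s_i$ coming from $\phi_i$), i.e.\ $x_0$ and $s_i x_0$; choosing $\gamma_i$ to send $x_0\mapsto[0,1]$ and $s_ix_0\mapsto[1,0]$ fixes it up to the residual $\GG_m\subset\PGL_2$ scaling the two coordinates, which is exactly the ambiguity the lemma allows. For the weight: the affine root subgroup $U_{\alpha_i}$ (the image of the upper-triangular unipotent in $\phi_i$) acts on $\Flag_G$ and its orbit through $x_0$ is the big cell $\mathbb{A}^1\subset C_i$ containing $x_0$; the $\tilT$-conjugation action on $U_{\alpha_i}$ is by the character $\alpha_i$, so the induced action on the tangent line $T_{x_0}C_i$, i.e.\ on the coordinate $x$ in the chart $[x,1]$, is by $\alpha_i$ as well. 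Hence in the chart around $x_0$ the action reads $t\cdot[x,1]=[\alpha_i(t)x,1]$, and to match the stated normalization $t\cdot[x,y]=[\alpha_i(t)^{-1}x,y]$ one simply composes $\gamma_i$ with the coordinate swap $[x,y]\mapsto[y,x]$ (legitimate, since it exchanges $[0,1]\leftrightarrow[1,0]$ — so one should instead set up $\gamma_i$ from the start using the opposite root subgroup $U_{-\alpha_i}$ through $x_0$, whose weight is $-\alpha_i$). Either way the sign is bookkeeping and I would just choose the root homomorphism/chart so that the weight on the affine chart containing $x_0=[0,1]$ is $-\alpha_i$, which is precisely (\ref{eq:minusaction})'s minus-sign convention propagated forward.

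The main obstacle, such as it is, is purely one of making the affine Kac--Moody group theory precise enough to justify ``$C_i=P_i/I\cong\PP^1$ as a $\tilT$-variety'' without circular appeal to the very cohomological facts we are trying to prove; I would handle this by citing the explicit construction of $\Flag_G$ and its Iwahori orbits in \cite{GKMFL}, where each rank-one orbit closure is built by hand from $\SL_2$, and where the $\tilT$-action including the loop-rotation factor is given explicitly. Since $\alpha_i=\delta-\theta$ for $i=0$ involves the loop-rotation character $\delta$ nontrivially, the only genuinely ``affine'' case is $i=0$: there $U_{\alpha_0}$ is the root subgroup for $z^{-1}\cdot(\text{lowest root vector})$ inside $G(F)$, and one checks directly that $\GG_m^{rot}$ acts on it with weight $\delta$ and $T$ with weight $-\theta$, matching $\alpha_0=\delta-\theta$. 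With that case verified explicitly and the cases $i=1,\dots,r$ being the classical $\SL_2\subset G$ statement, the lemma follows.
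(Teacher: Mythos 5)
Your proposal is correct and takes essentially the same route as the paper: the paper likewise parametrizes the open cell of $C_i$ through $x_0$ by the affine root subgroup $\exp(\frakg_{-\alpha_i})$ for $i=1,\dots,r$ (resp.\ $\exp(z^{-1}\frakg_{\theta})$ for $i=0$, since $-\alpha_0=-\delta+\theta$), reads off the $\tilT$-weight $-\alpha_i$ from the conjugation action, and extends the resulting coordinate to an isomorphism with $\PP^1$. The one slip is your initial claim that the $U_{\alpha_i}$-orbit through $x_0$ is the big cell containing $x_0$ — since $U_{\alpha_i}\subset I$ it fixes $x_0$, and the cell through $x_0$ is the $U_{-\alpha_i}$-orbit, of weight $-\alpha_i$ — but you catch and correct this yourself, arriving at the right sign.
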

\begin{proof}
For any root $\alpha$, let $\frakg_\alpha\subset \frakg$ be the corresponding root space and $X_\alpha\in\frakg_\alpha$ be a nonzero root vector. For $i=1,\cdots,r$ we have
\begin{equation*}
Is_i\cdot x_0=\exp(\frakg_{-\alpha_i}).
\end{equation*}
From this it is easy to see that $\tilT$ acts on $C_i$ via $-\alpha_i$. Let $\gamma_i$ be the unique extension to $C_i$ of the map
\begin{eqnarray*}
Is_i\cdot x_0=\exp(\frakg_{-\alpha_i}) &\to& \AA^1\subset\PP^1\\
\exp(sX_{-\alpha_i}) &\mapsto& [s,1].
\end{eqnarray*}
For $i=0$, we have
\begin{equation*}
Is_0\cdot x_0=\exp(z^{-1}\frakg_{\theta}).
\end{equation*}
It is easy to see that $\tilT$ acts on $C_0$ via $-\alpha_0=-\delta+\theta$. Let $\gamma_0$ be the unique extension to $C_0$ of the map
\begin{eqnarray*}
Is_0\cdot x_0=\exp(z^{-1}\frakg_{\theta}) &\to& \AA^1\subset\PP^1\\
\exp(sz^{-1}X_{\theta}) &\mapsto& [s,1].
\end{eqnarray*}
It is easy to verify that these $\gamma_i$'s satisfy the requirements.
\end{proof}

\subsection{Proof of Theorem \ref{th:flagarr}}
\begin{proof}
It is clear that the conditions (\ref{eq:mux0}) and (\ref{eq:diagforpsi}) determines $\psi_*$ uniquely. Moreover, from (\ref{eq:4termaction}) and lemma \ref{lem:twistequiv}, it is easy to see that all the squares in (\ref{eq:diagforpsi}) commute. To prove the theorem, we need to show that the $\psi_*$ thus determined is $\tilW$-equivariant. We will proceed the proof along with the calculation of the sections $\mu_{\tilw\cdot x_0}$.

For each $\tilw\in\tilW$, by lemma \ref{lem:twistequiv}, the action of $\tilw$ on $H^{\tilT}_2(\Flag_G)$ has the form:
\begin{equation}\label{eq:defineB}
\tilw\cdot(u,\xi,\eta,v)=\psi_*\left(u,w\xi+u\lambda,\eta,v)+\psi_*(0,0,B_{\tilw}(u,\xi)\right)
\end{equation}
for some linear map $B_{\tilw}:\CC d\oplus\frakt\to\frakt\oplus\CC K$. The fact that $B_{\tilw}$ comes from an action implies that the assignment $\tilw\mapsto B_{\tilw}$ satisfies the cocycle condition
\begin{equation*}
B_{\tilw_1\tilw_2}=B_{\tilw_1}\circ\tilw_2+B_{\tilw_2}, \forall \tilw_1, \tilw_2\in\tilW .
\end{equation*}
In fact, this is the cocycle defining the extension (\ref{eq:homotwoexact}) of $\tilW$-modules.

On the other hand, consider linear maps
\begin{equation}\label{eq:defineA}
A_{\tilw}:=\mu_{\tilw\cdot x_0}-\mu_{x_0}: \tilt\to H_2(\Flag_G).
\end{equation}
By lemma \ref{c:twistmu}, it is easy to check that
\begin{equation}\label{eq:relateAB}
\psi_*^{-1}(A_{\tilw})=B_{\tilw}\circ\tilw^{-1}+w^{-1}(\xi-u\lambda)-\xi.
\end{equation}
A direct calculation shows that the assignment $\tilw\mapsto A_{\tilw}$ also satisfies a similar cocycle condition
\begin{equation*}
 A_{\tilw_1\tilw_2}=A_{\tilw_2}\circ\tilw_1^{-1}+A_{\tilw_1}, \forall \tilw_1, \tilw_2\in\tilW .
\end{equation*}

We can calculate $A_{s_i}$ explicitly. According to lemma \ref{lem:p1action}, we have a commutative diagram:
\[
\xymatrix{\tilt\ar[rr]^{A_{s_i}}\ar[d]^{-\alpha_i} & & H_2(C_i)\ar[d]^{\gamma_{i,*}}\ar[r] & H_2(\Flag_G)\\
\CC\ar[rr]^{\mu_{\infty}-\mu_0} & & H_2(\PP^1)}
\]
Here, $\mu_0$ and $\mu_\infty$ are the sections as in (\ref{eq:definesection}) corresponding to the fixed points $0=[0,1]$ and $\infty=[1,0]$ of the following $\GG_m$-action on $\PP^1$:
\begin{equation*}
s\cdot[x,y]=[sx,y], \forall s\in \GG_m, [x,y]\in\PP^1.
\end{equation*}
Standard calculation shows that $(\mu_\infty-\mu_0)(1)=[\PP^1]$. Therefore, using the coordinates in the decomposition (\ref{eq:decomp}), we conclude that
\begin{equation*}
A_{s_i}(u,\xi)=\langle(u,\xi),-\alpha_i\rangle[C_i], \forall i=0,\cdots,r.
\end{equation*}
or equivalently
\begin{equation*}
\phi_{*,\CC}\circ A_{s_i}(u,\xi)=\langle(u,\xi),-\alpha_i\rangle\check{\alpha_i}=s_i(u,\xi)-(u,\xi), \forall i=0,\cdots,r.
\end{equation*}
Because $\{A_{\tilw}\}$ is a cocycle, it is unique once $\{A_{s_0},\cdots,A_{s_r}\}$ are determined. Therefore it is not difficult to check that
\begin{equation}\label{eq:solveA}
\phi_{*,\CC}\circ A_{\tilw}(u,\xi)=\tilw^{-1}(u,\xi)-(u,\xi), \forall \tilw\in\tilW.
\end{equation}
Hence by (\ref{eq:defineA})
\begin{equation*}
\pi_1\circ\mu_{\tilw\cdot x_0}(u,\xi)=\xi+A_{\tilw}(u,\xi)=\tilw^{-1}(ud+\xi)-ud
\end{equation*}
By (\ref{eq:weylaction}), this is exactly $\left(w^{-1}(\xi-u\lambda),(\xi|\lambda)-\frac{u}{2}|\lambda|^2\right)$. This proves the second statement of the theorem.

Plugging (\ref{eq:solveA}) into (\ref{eq:relateAB}), we get
\begin{equation*}
B_{\tilw}(u,\xi)=\left((w\xi|\lambda)+\frac{u}{2}|\lambda|^2\right)K.
\end{equation*}
Comparing with (\ref{eq:4termaction}), we conclude that $\psi_*$ is $\tilW$-equivariant. This completes the proof of the theorem.
\end{proof}


\section{Application to moment maps}\label{s:moment}

In this section, we will use our result on the fixed point arrangement of $\Flag_G$ to study moment maps of $\Flag_G$ under the $\tilT$-action. First we need to recall a general procedure to recover the moment map image of fixed points from the fixed point arrangement. For this purpose, we will work with a torus $T$ acting on a projective scheme $X$ with finitely many fixed points. As we are talking about moment maps, we restrict the action to the {\em compact} form $T_\RR$ of $T$. Let $\frakt_\RR$ be the Lie algebra of $T_\RR$.

\subsection{The universal moment map}

For each fixed point $x\in X^T$, we have the restriction maps
\begin{equation}\label{eq:mudual}
i_{x}^*: H^2_T(X,\RR)\to H^2_T(\{x\},\RR)\cong\frakt^*_\RR 
\end{equation}
which is dual to the real form the section $\mu_x$ in (\ref{eq:definesection}). The collection of $i_x^*$ for all $x\in X^T$ gives a map:
\begin{equation}\label{eq:univmom}
 \mathfrak{M}_X: X^T\times H^2_T(X,\RR)\to\frakt^*_\RR
\end{equation}
which we call the {\em universal moment map}. The name is justified as we will see in \ref{p:momentagree} that the effect of all moment maps on the fixed points can be read from the map $\mathfrak{M}_X$. 

\subsection{Standard moment maps}\label{ss:stmoment}
Let $E$ be a $\CC$-vector space and $\PP(E^*)$ be the projective space parametrizing lines in $E$. Suppose a torus $T$ acts on $E$ linearly, we get a canonical $T$-action on $\PP(E^*)$ and an $T$-equivariant structure on $\calO_{\PP(E^*)}(1)$. If we choose the (properly normalized) Fubini-Study form $\omega_{FS}$ on $\PP(E^*)$, we can arrange the moment map
\begin{equation}\label{eq:stmoment}
m_{st}: \PP(E^*)\to \frakt^*_\RR
\end{equation}
in such a way that for any character $\chi\in X^\bullet(T)$ and any $\chi$-eigenline $\ell\subset E$, we have $m_{st}([\ell])=\chi$. The map $m_{st}$ is called the {\em standard moment map} associated to the $T$-equivariant structure on $\calO_{\PP(E^*)}(1)$.   

Let $\Pic^T(X)$ denote the isomorphism classes of $T$-equivariant line bundles on $X$. If $\calL\in \Pic^T(X)$ defines a $T$-equivariant projective morphism $f:X\to \PP(|\calL|)=\PP^n$, then $\calO_{\PP^n}(1)$ carries a unique $T$-equivariant structure. The standard moment map on $\PP^n$, pulled back to $X$, gives a moment map
\begin{equation}\label{eq:definemoment}
m_{\calL}:X\stackrel{f}{\rightarrow} \PP^n \stackrel{m_{st}}{\longrightarrow} \frakt^*_\RR
\end{equation}
This map is the same as the moment map defined by the K\"{a}hler form $f^*\omega_{FS}$.

\begin{prop}\label{p:momentagree}
For $\calL\in \Pic^T(X)$ as above, we have
\begin{equation}\label{eq:momentagree}
 \mathfrak{M}_X\left(x,c^T_1(\calL)\right)=m_{\calL}(x)
\end{equation}
for all $x\in X^T$. Here $c^T_1:\Pic^T(X)\to H^2_T(X)$ is the equivariant Chern class.
\end{prop}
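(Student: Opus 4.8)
The plan is to reduce the statement to the standard equivariant-symplectic fact that the moment map value at a fixed point equals the weight of the $T$-action on the fiber of the prequantum line bundle there, and to identify that weight with the restriction $i_x^* c_1^T(\calL)$. Concretely, $\mathfrak{M}_X(x, c_1^T(\calL))$ is by definition $i_x^*(c_1^T(\calL)) \in H^2_T(\{x\},\RR) \cong \frakt^*_\RR$, so the assertion is exactly that $i_x^* c_1^T(\calL) = m_\calL(x)$ under this identification. Since $m_\calL = m_{st} \circ f$ and $c_1^T(\calL) = f^* c_1^T(\calO_{\PP^n}(1))$ with $i_x^* f^* = i_{f(x)}^*$, both sides are pulled back from $\PP^n = \PP(|\calL|)$, so it suffices to prove the proposition for $X = \PP(E^*)$ with $\calL = \calO(1)$ and its canonical $T$-equivariant structure.

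First I would record the naturality: for a $T$-equivariant morphism $f: X \to Y$ of $T$-spaces with finitely many fixed points and a $T$-equivariant line bundle $\calL$ on $Y$, one has $\mathfrak{M}_X(x, f^* c_1^T(\calL)) = \mathfrak{M}_Y(f(x), c_1^T(\calL))$ because $i_x^* \circ f^* = i_{f(x)}^*$ as maps $H^2_T(Y) \to \frakt^*_\RR$ (using the canonical identification $H^2_T(\{x\}) \cong \frakt^*_\RR \cong H^2_T(\{f(x)\})$). On the symplectic side, $f^* \omega_{FS}$ defines the moment map $m_\calL$, and moment maps pull back under equivariant maps in the same way, so $m_\calL = m_{st}\circ f$. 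Hence the general case follows from the case of projective space.

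Next, for $X = \PP(E^*)$ I would compute both sides at a fixed point $[\ell]$, where $\ell \subset E$ is a $\chi$-eigenline for a character $\chi \in X^\bullet(T)$. On the symplectic side, by the normalization in section \ref{ss:stmoment} we have $m_{st}([\ell]) = \chi$ by construction. On the cohomological side, the $T$-equivariant structure on $\calO_{\PP(E^*)}(1)$ is characterized by the fact that its fiber at $[\ell]$ — which is $\ell$ as a line inside $E$ — carries the $T$-weight $\chi$; equivalently, $i_{[\ell]}^* c_1^T(\calO(1))$ equals the first Chern class of the $1$-dimensional $T$-representation $\ell$, which under $H^2_T(\{[\ell]\}) \cong X^\bullet(T)\otimes\RR \subset \frakt^*_\RR$ is precisely $\chi$. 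Therefore $\mathfrak{M}_{\PP(E^*)}([\ell], c_1^T(\calO(1))) = \chi = m_{st}([\ell])$, which is the claim.

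The main obstacle — really the only nontrivial point — is pinning down the two normalizations so that the identification $H^2_T(\mathrm{pt}) \cong \frakt^*_\RR$ used in \eqref{eq:mudual} is the \emph{same} identification under which the equivariant Chern class of a weight-$\chi$ line equals $\chi$, and matching this with the choice of (scaling of the) Fubini--Study form that makes $m_{st}([\ell]) = \chi$ on the nose rather than up to a factor of $2\pi$ or a sign. Once the conventions in sections \ref{ss:stmoment} and the definition of $c_1^T$ are fixed compatibly (as the paper has implicitly done), the argument above is essentially formal: reduce to $\PP(E^*)$ by functoriality of both $\mathfrak{M}$ and the moment map, then evaluate on an eigenline where both sides are manifestly $\chi$. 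I do not expect any spectral-sequence or localization input to be needed beyond the elementary identification of $i_x^* c_1^T$ of a line bundle with the weight of its fiber.
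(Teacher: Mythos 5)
Your proposal is correct and follows essentially the same route as the paper: reduce to $\PP(E^*)$ with $\calO(1)$ by functoriality of both $\mathfrak{M}$ and the moment map, then evaluate at an eigenline where both sides give $\chi$. One small slip worth noting: the fiber of $\calO_{\PP(E^*)}(1)$ at $[\ell]$ is $\ell^*$, not $\ell$ (it is $\calO(-1)$ whose fiber is $\ell$); the paper handles this via the explicit Cartesian diagram showing $i_x^*\calO_{\PP(E^*)}(1)$ has total space $\EE T\stackrel{T}{\times}\CC(-\chi)$, which under its sign convention has $c_1^T$ equal to $\chi$ — exactly the normalization point you flagged as needing care.
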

\begin{proof}
We have a commutative diagram
\begin{equation*}
\xymatrix{X^T\ar@<-6ex>[d]^{f}\times H^2_T(X,\RR)\ar[r]^(.7){\mathfrak{M}_{X}} & \frakt^*_\RR \ar@{=}[d]\\
(\PP^n)^T\times H^2_T(\PP^n,\RR)\ar[r]^(.7){\mathfrak{M}_{\PP^n}}\ar@<-2ex>[u]^{f^*} & \frakt^*_\RR}
\end{equation*}
Hence for each $x\in X^T$, we have
\begin{equation*}
\mathfrak{M}_X\left(x,c_1^T(\calL)\right)=\mathfrak{M}_{\PP^n}\left(f(x),c_1^T(\calO_{\PP^n}(1))\right). 
\end{equation*}
In view of the definition (\ref{eq:definemoment}) of $m_\calL$, it suffices to prove (\ref{eq:momentagree}) for $X=\PP^n=\PP(E^*)$ and $\calL=\calO_{\PP(E^*)}(1)$. Now each fixed point $x$ corresponds to an eigenline $\ell\subset E$ on which $T$ acts via some character $\chi\in X^\bullet(T)$. Consider the Cartesian diagram
\begin{equation*}
\xymatrix{\EE T\stackrel{T}{\times}(\ell-\{0\})\ar[r]\ar[d] & \EE T\stackrel{T}{\times}(E-\{0\})\ar[d]^{p}\\
\EE T\stackrel{T}{\times}(\{x\})\ar[r]^{i_x} & \EE T\stackrel{T}{\times}\PP(E^*)}
\end{equation*}
where $p$ realizes the total space of the line bundle $\calO_{\PP(E^*)}(-1)$. From this we see that $i_x^*\calO_{\PP(E^*)}(1)$ is the line bundle $\calO(\chi)$ on $\BB T$ with total space $\EE T\stackrel{T}{\times}\CC(-\chi)$. Therefore we have
\begin{equation*}
\mathfrak{M}_{\PP(E^*)}\left(x,c^T_1(\calO_{\PP(E^*)}(1))\right)=c^T_1(\calO(\chi))=\chi=m_{st}(x).
\end{equation*}
This completes the proof of the proposition.
\end{proof}

\begin{remark}
The above discussion gives the procedure to pass from the fixed point arrangement to the moment map images of fixed points. Although we restricted ourselves to projective schemes, the procedure also works for ind-projective schemes once the line bundle defines an ind-projective morphism. 
\end{remark}

\subsection{Line bundles on $\Flag_G$}\label{ss:pic} For each weight $\chi\in X^\bullet(T)$, we have a line bundle $\calL_\chi=G(F)\stackrel{I}{\times}\CC(-\chi)$. Here $\CC(-\chi)$ is the one-dimensional representation of $I$ through the quotient $I\twoheadrightarrow B\twoheadrightarrow T$. Moreover, we have the {\em determinant bundle} $\calL_{\det}$ which is the pull-back of the ample generator $\calL_{\det,\Grass}$ of $\Pic(\Grass_G)$ (see \cite{LS}).

We have a natural {\em degree pairing}
\begin{equation}\label{eq:definepairing}
\langle-,-\rangle: \Pic(\Flag_G)\times H_2(\Flag_G,\ZZ)\to\ZZ 
\end{equation}
which sends $(\calL,[C])$ to $\deg(\calL|_{[C]})=\langle c_1(\calL),[C]\rangle$.

\begin{lemma}\label{lem:pic}
We have an isomorphism
\begin{equation}\label{eq:phipic}
\phi^*: X^\bullet(T)\oplus\ZZ\Lambda_0\stackrel{\sim}{\longrightarrow}\Pic(\Flag_G)
\end{equation}
which sends $\chi\in X^\bullet(T)$ to $\calL_{\chi}$ and $\Lambda_0$ to $\calL_{\det}$. Moreover, the isomorphism $\phi_*$ in (\ref{eq:homotwo}) and $\phi^*$ are compatible with the natural pairing between $\frakh$ and $\frakh^*$ and the degree pairing (\ref{eq:definepairing}), i.e.,
\begin{equation}\label{eq:pairingagree}
\langle\calL,[C]\rangle=\langle\phi^{*,-1}(\calL),\phi_*^{-1}([C])\rangle.
\end{equation}
\end{lemma}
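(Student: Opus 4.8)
The plan is to establish the isomorphism \eqref{eq:phipic} first on the Picard group level and then verify the pairing compatibility \eqref{eq:pairingagree} by a reduction to the curves $C_i$. For the first part, recall that $\Pic(\Flag_G)$ is computed using the Bruhat–Tits stratification: since each $I$-orbit closure $\overline{I\tilw\cdot x_0}$ is a projective variety built from affine cells, its Picard group stabilizes for $\tilw$ large, and the boundary divisors $D_i=\overline{Is_i\cdot x_0}$ (or rather their closures inside the large orbit closures) give a basis dual to the curve classes $[C_i]$. More precisely, I would invoke the standard fact (cf.\ \cite{LS}, or \cite{GKMFL}) that $\Pic(\Flag_G)$ is free on $r+1$ generators, with the sub-line-bundles $\calL_\chi$ for $\chi\in X^\bullet(T)$ accounting for the $\calL_{\chi}$-direction (these are pulled back along $\Flag_G\to \BB I\to \BB T$, or rather come from the associated bundle construction) and $\calL_{\det}$ providing the extra generator coming from the central direction $\ZZ\Lambda_0$. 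The map $\phi^*$ sending $\chi\mapsto\calL_\chi$ and $\Lambda_0\mapsto\calL_{\det}$ is then manifestly a homomorphism from $X^\bullet(T)\oplus\ZZ\Lambda_0$ to $\Pic(\Flag_G)$, and it is an isomorphism because it sends a basis to a basis; the latter is checked by the pairing computation below (a perfect pairing between two free modules exhibited by $\phi_*$ and $\phi^*$ forces both to be isomorphisms once one computes the Gram matrix and sees it is unimodular).

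For the pairing compatibility, the key reduction is that the degree pairing $\langle\calL,[C_i]\rangle$ only depends on the restriction $\calL|_{C_i}$, and each $C_i\cong\PP^1$ by Lemma~\ref{lem:p1action}. So I would compute $\deg(\calL_\chi|_{C_i})$ and $\deg(\calL_{\det}|_{C_i})$ separately. For $\calL_\chi$ with $i\geq 1$: the curve $C_i$ sits inside a copy of $\SL_2$ (or $\PP^1$) coming from the root subgroup for $\alpha_i$, and $\calL_\chi=G(F)\stackrel{I}{\times}\CC(-\chi)$ restricted to this curve is the standard line bundle $\calO(\langle\chi,\check\alpha_i\rangle)$ on $\PP^1$, so $\langle\calL_\chi,[C_i]\rangle=\langle\chi,\check\alpha_i\rangle$, matching the natural pairing $\langle\chi,\check\alpha_i\rangle$ between $\frakt^*\subset\frakh^*$ and $\check\alpha_i\in\frakt\oplus\ZZ K\subset\frakh$ (note $\langle\chi,K\rangle=0$, consistent). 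For $i=0$: here $C_0=\exp(z^{-1}\frakg_\theta)$-closure, the torus acts via $-\alpha_0=-\delta+\theta$, and $\calL_\chi$ restricted to $C_0$ has degree $\langle\chi,\check\theta\rangle$ — but wait, since $\check\alpha_0=K-\check\theta$ and $\langle\chi,K\rangle=0$, the natural pairing gives $\langle\chi,\check\alpha_0\rangle=-\langle\chi,\check\theta\rangle$; I need to track the sign conventions carefully here, and this is exactly where the minus-sign conventions of \eqref{eq:minusaction} and the choice $\alpha_0=\delta-\theta$ interact. For $\calL_{\det}$: since it is pulled back from $\Grass_G$ along $p_G$, and $C_i$ for $i\geq 1$ is contracted by $p_G$ (it lies in the finite flag variety fiber), we get $\langle\calL_{\det},[C_i]\rangle=0=\langle\Lambda_0,\check\alpha_i\rangle$ for $i\geq1$; while for $C_0$ the projection $p_G$ maps $C_0$ isomorphically to a generating curve in $\Grass_G$, and the ample generator $\calL_{\det,\Grass}$ has degree $1$ there, giving $\langle\calL_{\det},[C_0]\rangle=1=\langle\Lambda_0,\check\alpha_0\rangle=\langle\Lambda_0,K-\check\theta\rangle=\langle\Lambda_0,K\rangle=1$ (using $\langle\Lambda_0,\check\theta\rangle=0$ since $\check\theta\in\frakt$). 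Assembling these computations shows the Gram matrices agree, which simultaneously proves \eqref{eq:pairingagree} and, since the natural pairing between $\frakh$ and $\frakh^*$ restricted to these lattices is perfect (unimodular), proves $\phi^*$ is an isomorphism.

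I expect the main obstacle to be pinning down the normalization of $\calL_{\det}$ and the sign/index bookkeeping for the $i=0$ curve, since the determinant bundle is defined via the central extension / level-one line bundle on the affine Grassmannian and one must match its first Chern class against $\Lambda_0$ under the identifications of Section~\ref{ss:kacmoody}; a clean way around this is to define $\langle-,-\rangle$ abstractly and observe that $\Pic(\Flag_G)\otimes\CC$ is the annihilator of $\CC\delta$ inside $\frakh^*$ paired against $H_2(\Flag_G,\CC)=\frakt\oplus\CC K$ (the span of the $\check\alpha_i$), and that $\calL_{\det}$ is by definition the unique (up to the $\calL_\chi$'s) bundle pulled back from $\Grass_G$, whose restriction to $C_0$ has degree exactly $1$ because $p_G(C_0)$ is the minimal $\Grass$-curve and $\calL_{\det,\Grass}$ is its ample generator. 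The secondary technical point is verifying that for large orbit closures the classes $[C_0],\dots,[C_r]$ really do span $H_2$ and the $D_i$ span $\Pic$ with the expected duality — but this is exactly the content already used in Section~\ref{ss:homotwo} for $\phi_*$, so I would cite it and dualize.
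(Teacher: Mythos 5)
This is essentially the paper's proof: reduce to the degree computations $\langle\calL_\chi,[C_i]\rangle$ and $\langle\calL_{\det},[C_i]\rangle$, handling $i=1,\dots,r$ via the root $\SL(2)$'s and the determinant bundle via $p_{G,*}$ (with $p_{G,*}[C_i]=0$ for $i\geq 1$ and $p_{G,*}[C_0]$ generating $H_2(\Grass_G,\ZZ)$), and then obtain $\phi^*$ as the dual of $\phi_*$. The one point you leave open --- the sign at $i=0$ --- is settled exactly as you suspect it should be: the homomorphism $\iota_0:\SL(2)\to G(F)$ with $e\mapsto zX_{-\theta}$, $f\mapsto z^{-1}X_{\theta}$ sends $h\mapsto -\check{\theta}$, so $\calL_\chi|_{C_0}\cong\SL(2)\stackrel{B}{\times}\CC(\langle-\chi,-\check{\theta}\rangle)$ has degree $-\langle\chi,\check{\theta}\rangle=\langle\chi,\check{\alpha}_0\rangle$, in agreement with the Cartan pairing.
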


\begin{proof}
It suffices to check (\ref{eq:pairingagree}), because then $\phi^*$ is uniquely determined as the dual of $\phi_*$. For $i=1,\cdots,r$ there is a homomorphism
\begin{equation*}
\iota_i: \SL(2)\to G\subset G(F)
\end{equation*}
such that the induced map on Lie algebras sends the standard basis $\{e,f,h\}$ of $\mathfrak{sl}(2)$ to $\{X_{\alpha_i}, X_{-\alpha_i}, \check{\alpha_i}\}$. Here for a root $\alpha$, $X_{\alpha}$ is some nonzero vector in the root space $\frakg_{\alpha}$ (c.f. the proof of \ref{lem:p1action}). This induces an isomorphism
\begin{equation*}
\iota^*_i: \calL_{\chi}\cong \SL(2)\stackrel{B}{\times}\CC(\langle-\chi,\check{\alpha_i}\rangle)
\end{equation*}
which has degree $\langle\chi,\check{\alpha_i}\rangle$ viewed as a line bundle on $\PP^1=\SL(2)/B\stackrel{\iota_i}{\to}C_i$ ($B$ is the Borel of $\SL(2)$ containing $\exp(e)$).

For $i=0$, we have a homomorphism
\begin{equation*}
\iota_0: \SL(2)\to G(F)
\end{equation*}
such that the induced map on Lie algebras sends $\{e,f,h\}$ to $\{zX_{-\theta}, z^{-1}X_{\theta}, -\check{\theta}\}$. This induces an isomorphism
\begin{equation*}
\iota^*_0: \calL_{\chi}\cong \SL(2)\stackrel{B}{\times}\CC(\langle-\chi,-\check{\theta}\rangle)
\end{equation*}
which has degree $-\langle\chi,\check{\theta}\rangle=\langle\chi,\check{\alpha_0}\rangle$ viewed as a line bundle on $\PP^1=\SL(2)/B\stackrel{\iota_0}{\to}C_0$. 

Finally, for $\calL_{\det}$, we have by adjunction
\begin{equation*}
\langle \calL_{\det}, [C_i]\rangle=\langle\calL_{\det,\Grass},p_{G,*}[C_i]\rangle=\delta_{i,0}=\langle\Lambda_0,\check{\alpha_i}\rangle
\end{equation*}
because $p_{G,*}[C_i]=0$ for $i=1,\cdots,r$ and $p_{G,*}[C_0]$ generates $H_2(\Grass, \ZZ)$. This completes the proof of (\ref{eq:pairingagree}), hence the lemma.  
\end{proof}

We are now ready to write down the images of $\Flag_G^T$ under the moment map $m_{\calL}$ associated to a line bundle $\calL=\calL_{\chi}\otimes\calL_{\det}^{\otimes\kappa}$. We have an isomorphism
\begin{equation}\label{eq:psidual}
\psi^*: \CC\delta\oplus\frakt^*\oplus\frakt^*\oplus\CC\Lambda_0\cong H_{\tilT}^2(\Flag_G)
\end{equation}
dual to $\psi_*$ in (\ref{eq:definepsi}).
We can fix the $\tilT$-equivariant structure of $\calL$ by requiring
\begin{equation}\label{eq:imagex0}
m_{\calL}(x_0)=(0,\chi)\in\RR\delta\oplus\frakt^*_\RR\cong\tilt^*_\RR.
\end{equation}
Different choices of equivariant structures on $\calL$ only result in a translation of the moment map image.

\begin{prop}\label{p:momentcoor}
In terms of the decomposition (\ref{eq:tiltdual}), for $\tilw=(\lambda,w)\in\tilW$, the image of $\tilw\cdot x_0$ under the moment map $m_{\calL}$ is
\begin{equation}\label{eq:momentcoor}
m_{\calL}(\tilw\cdot x_0)=\left(-\langle w\chi,\lambda\rangle-\frac{\kappa}{2}|\lambda|^2, w\chi+\kappa\sigma(\lambda)\right)
\end{equation}
where $\sigma:\frakt\stackrel{\sim}{\to}\frakt^*$ is as in (\ref{eq:definesigma}).
\end{prop}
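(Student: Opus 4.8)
The plan is to combine Proposition~\ref{p:momentagree}, which identifies the moment map image of a fixed point with the value of the universal moment map $\mathfrak{M}_{\Flag_G}$, with the explicit formula~\eqref{eq:formulaformu} for the sections $\mu_{\tilw\cdot x_0}$ from Theorem~\ref{th:flagarr}. Concretely, $m_{\calL}(\tilw\cdot x_0) = \mathfrak{M}_{\Flag_G}(\tilw\cdot x_0, c_1^{\tilT}(\calL)) = i_{\tilw\cdot x_0}^* c_1^{\tilT}(\calL)$, and since $i_{\tilw\cdot x_0}^*$ is, after passing to real forms, the transpose of $\mu_{\tilw\cdot x_0}:\tilt_\RR\to H_2^{\tilT}(\Flag_G,\RR)$, everything reduces to pairing the known expression for $\mu_{\tilw\cdot x_0}$ against the equivariant Chern class of $\calL$.

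First I would compute $c_1^{\tilT}(\calL)$ in the coordinates given by $\psi^*$ in~\eqref{eq:psidual}. By Lemma~\ref{lem:pic}, the non-equivariant class $c_1(\calL_\chi\otimes\calL_{\det}^{\otimes\kappa})$ corresponds to $\chi + \kappa\Lambda_0 \in \frakt^*\oplus\CC\Lambda_0$, which sits inside $\frakh^*$; under the dual of the bottom row of~\eqref{eq:diagforpsi} this is the image of the $H^2(\Flag_G)$-part. The equivariant lift is then pinned down by the normalization~\eqref{eq:imagex0}, namely $i_{x_0}^* c_1^{\tilT}(\calL) = (0,\chi)$. Using~\eqref{eq:mux0}, which says $\mu_{x_0}(u,\xi)=\psi_*(u,\xi,\xi,0)$, the condition $\langle c_1^{\tilT}(\calL), \mu_{x_0}(u,\xi)\rangle = \langle(0,\chi),(u,\xi)\rangle = \langle\chi,\xi\rangle$ determines the remaining components of $\psi^*(c_1^{\tilT}(\calL))$. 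I expect to find $c_1^{\tilT}(\calL) = \psi^*(0,\chi,-\chi,\kappa)$ or a variant thereof, where the $\CC\Lambda_0$-coordinate is $\kappa$ coming from $\calL_{\det}$ and the two $\frakt^*$-coordinates are forced to be $\chi$ and $-\chi$ by compatibility with $\mu_{x_0}$ and the pairing conventions of~\ref{ss:kacmoody}.

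Next I would evaluate the pairing $\langle c_1^{\tilT}(\calL), \mu_{\tilw\cdot x_0}(u,\xi)\rangle$ using~\eqref{eq:formulaformu}, which gives $\mu_{\tilw\cdot x_0}(u,\xi) = \psi_*(u,\xi,w^{-1}(\xi-u\lambda),(\xi|\lambda)-\tfrac{u}{2}|\lambda|^2)$. With the dual basis pairing between $\CC d\oplus\frakt\oplus\frakt\oplus\CC K$ and $\CC\delta\oplus\frakt^*\oplus\frakt^*\oplus\CC\Lambda_0$ (so that $\frakt_\eta$ pairs with the second $\frakt^*$ and $\CC K$ pairs with $\CC\Lambda_0$), the pairing becomes
\begin{equation*}
\langle c_1^{\tilT}(\calL), \mu_{\tilw\cdot x_0}(u,\xi)\rangle = \langle \chi,\xi\rangle - \langle \chi, w^{-1}(\xi - u\lambda)\rangle + \kappa\bigl((\xi|\lambda) - \tfrac{u}{2}|\lambda|^2\bigr).
\end{equation*}
Separating the coefficient of $u$ (the $\delta$-component of the output, i.e.\ the loop-rotation part of $\tilt^*_\RR$) from the part linear in $\xi$ (the $\frakt^*_\RR$-component) and using $\langle\chi, w^{-1}\xi\rangle = \langle w\chi,\xi\rangle$ together with $(\xi|\lambda) = \langle\sigma(\lambda),\xi\rangle$ via~\eqref{eq:definesigma}, the $\frakt^*$-component collects to $w\chi + \kappa\sigma(\lambda)$ and the $\delta$-coefficient to $-\langle w\chi,\lambda\rangle - \tfrac{\kappa}{2}|\lambda|^2$, which is exactly~\eqref{eq:momentcoor}.

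The main obstacle I anticipate is bookkeeping: getting every sign right, particularly tracking the minus sign built into~\eqref{eq:minusaction}, the sign in the $\tilW$-action~\eqref{eq:weylaction} versus~\eqref{eq:4termaction}, and the identification of which $\frakt^*$-factor in $\psi^*$ pairs with $\eta$ rather than $\xi$. A secondary subtlety is justifying that the equivariant structure is correctly normalized so that the $\CC\Lambda_0$-component of $c_1^{\tilT}(\calL)$ really equals $\kappa$ with no additional $\xi$- or $u$-dependent shift; this follows from the fact that~\eqref{eq:imagex0} fixes the equivariant structure uniquely and that $\calL_{\det}$, being pulled back from $\Grass_G$ where the analogous computation of Atiyah--Pressley is standard, has $\tilT$-weight zero at $x_0$. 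Once these conventions are fixed the computation is routine, and the corollary~\ref{c:paraboloid} about the paraboloid then follows by observing that~\eqref{eq:momentcoor} exhibits the $\delta$-coordinate as a quadratic function of the $\frakt^*$-coordinate.
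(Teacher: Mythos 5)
Your overall strategy is exactly the paper's: combine proposition \ref{p:momentagree} with the explicit sections (\ref{eq:formulaformu}), after pinning down $c_1^{\tilT}(\calL)$ in the coordinates (\ref{eq:psidual}) via lemma \ref{lem:pic} and the normalization (\ref{eq:imagex0}). But your determination of $c_1^{\tilT}(\calL)$ is wrong, and the error propagates so that the pairing you write down does not collect to (\ref{eq:momentcoor}). The correct class is $\psi^*(0,0,\chi,\kappa)$, not $\psi^*(0,\chi,-\chi,\kappa)$. Two constraints force this. First, by the dual of diagram (\ref{eq:diagforpsi}) the restriction map $H^2_{\tilT}(\Flag_G)\to H^2(\Flag_G)$ is the projection onto the \emph{last} two coordinates $\frakt^*_\eta\oplus\CC\Lambda_0$, and lemma \ref{lem:pic} says the non-equivariant class of $\calL_\chi\otimes\calL_{\det}^{\otimes\kappa}$ is $\chi+\kappa\Lambda_0$; so those coordinates must be $(\chi,\kappa)$, not $(-\chi,\kappa)$. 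Second, since $\mu_{x_0}(u,\xi)=\psi_*(u,\xi,\xi,0)$, the restriction to $x_0$ of $\psi^*(a,b,c,e)$ is $(a,\,b+c)\in\CC\delta\oplus\frakt^*$, so (\ref{eq:imagex0}) forces $(a,b)=(0,0)$. Indeed your candidate $(0,\chi,-\chi,\kappa)$ fails your own normalization check: it pairs with $\mu_{x_0}(u,\xi)=\psi_*(u,\xi,\xi,0)$ to give $\langle\chi,\xi\rangle-\langle\chi,\xi\rangle=0$ rather than $\langle\chi,\xi\rangle$.

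Consequently the displayed pairing in your proposal evaluates to $\langle\chi,\xi\rangle-\langle w\chi,\xi\rangle+u\langle w\chi,\lambda\rangle+\kappa(\xi|\lambda)-\tfrac{\kappa u}{2}|\lambda|^2$, whose $\frakt^*$-component is $\chi-w\chi+\kappa\sigma(\lambda)$ and whose $\delta$-coefficient is $+\langle w\chi,\lambda\rangle-\tfrac{\kappa}{2}|\lambda|^2$; neither agrees with (\ref{eq:momentcoor}). With the correct class $(0,0,\chi,\kappa)$ the computation closes immediately: pairing against $\mu_{\tilw\cdot x_0}(u,\xi)=\psi_*\bigl(u,\xi,w^{-1}(\xi-u\lambda),(\xi|\lambda)-\tfrac{u}{2}|\lambda|^2\bigr)$ gives $\langle w\chi,\xi\rangle-u\langle w\chi,\lambda\rangle+\kappa\langle\sigma(\lambda),\xi\rangle-\tfrac{\kappa u}{2}|\lambda|^2$, which is exactly $\left(-\langle w\chi,\lambda\rangle-\tfrac{\kappa}{2}|\lambda|^2,\ w\chi+\kappa\sigma(\lambda)\right)$, as in the paper's proof. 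The fix is local, but as written your argument does not establish the proposition.
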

\begin{proof}
Since $\calL=\calL_\chi\otimes \calL_{\det}^{\otimes\kappa}$, by \ref{lem:pic}, we have
\begin{equation*}
c^T_1(\calL)=(-,-,\chi,\kappa)
\end{equation*}
in terms of the coordinates given by the decomposition (\ref{eq:psidual}). The requirement (\ref{eq:imagex0}) shows that the first two coordinate are actually 0. Taking the dual of $\mu_{\tilw\cdot x_0}$ (as written explicitly in (\ref{eq:formulaformu})), we get
\begin{eqnarray*}
& & \mathfrak{M}_{\Flag_G}(\tilw\cdot x_0,c^T_1(\calL))=\mathfrak{M}_{\Flag_G}\left(\tilw\cdot x_0,(0,0,\chi,\kappa)\right)\\
&=& \left(-\langle w\chi,\lambda\rangle-\frac{\kappa}{2}|\lambda|^2, w\chi+\kappa\sigma(\lambda)\right)
\end{eqnarray*}
which gives (\ref{eq:momentcoor}) by proposition \ref{p:momentagree}. 
\end{proof}

\begin{cor}\label{c:paraboloid}
The image of $\Flag_G^T$ under the moment map $m_{\calL}$ lies on the paraboloid 
\begin{equation}\label{eq:paraboloid}
\{(m_0,m_1)\in\RR\delta\oplus\frakt_\RR^*|m_0=-\dfrac{1}{2\kappa}(|m_1|^2-|\chi|^2)\}.
\end{equation}
\end{cor}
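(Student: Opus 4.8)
The plan is to derive Corollary~\ref{c:paraboloid} directly from the explicit coordinate formula for the moment map image of the fixed points given in Proposition~\ref{p:momentcoor}. That proposition states that for $\tilw=(\lambda,w)\in\tilW$,
\[
m_{\calL}(\tilw\cdot x_0)=\left(-\langle w\chi,\lambda\rangle-\frac{\kappa}{2}|\lambda|^2,\ w\chi+\kappa\sigma(\lambda)\right)\in\RR\delta\oplus\frakt^*_\RR,
\]
so it suffices to verify the single algebraic identity defining the paraboloid \eqref{eq:paraboloid}, namely that $m_0=-\tfrac{1}{2\kappa}(|m_1|^2-|\chi|^2)$ when $m_0=-\langle w\chi,\lambda\rangle-\tfrac{\kappa}{2}|\lambda|^2$ and $m_1=w\chi+\kappa\sigma(\lambda)$.

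The key computation is to expand $|m_1|^2=|w\chi+\kappa\sigma(\lambda)|^2$ using the Killing form on $\frakt^*$. First I would use that the Killing form is $W$-invariant, so $|w\chi|^2=|\chi|^2$. Next, the cross term is $2\kappa(w\chi|\sigma(\lambda))$, and by the defining property of $\sigma$ in \eqref{eq:definesigma}, $(w\chi|\sigma(\lambda))=\langle w\chi,\lambda\rangle$. Finally $|\kappa\sigma(\lambda)|^2=\kappa^2|\sigma(\lambda)|^2=\kappa^2|\lambda|^2$, using that $\sigma$ is an isometry between the Killing forms on $\frakt$ and $\frakt^*$ (as set up in section~\ref{ss:defineflag}). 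Putting these together,
\[
|m_1|^2-|\chi|^2=2\kappa\langle w\chi,\lambda\rangle+\kappa^2|\lambda|^2=-2\kappa\left(-\langle w\chi,\lambda\rangle-\frac{\kappa}{2}|\lambda|^2\right)=-2\kappa\,m_0,
\]
which rearranges to exactly \eqref{eq:paraboloid}. Since this holds for every $\tilw\in\tilW$, and $\Flag_G^T$ is precisely the $\tilW$-orbit $\{\tilw\cdot x_0\}$, the image of the entire fixed-point set lies on the stated paraboloid.

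There is essentially no obstacle here: the corollary is a one-line consequence of Proposition~\ref{p:momentcoor} together with the compatibility of $\sigma$ with the Killing forms. The only point requiring a small amount of care is bookkeeping the sign conventions — in particular the minus sign built into the identification \eqref{eq:minusaction} and the normalization \eqref{eq:imagex0} of the equivariant structure on $\calL$ that fixes $m_{\calL}(x_0)=(0,\chi)$ — to make sure the coefficient $-\tfrac{1}{2\kappa}$ comes out with the correct sign rather than $+\tfrac{1}{2\kappa}$. One sanity check is to set $\tilw=\id$ (so $\lambda=0$, $w=1$): then $m_0=0$ and $m_1=\chi$, and indeed $0=-\tfrac{1}{2\kappa}(|\chi|^2-|\chi|^2)$, consistent with \eqref{eq:imagex0}.
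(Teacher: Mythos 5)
Your proposal is correct and is exactly the "direct calculation" from Proposition \ref{p:momentcoor} that the paper's one-line proof alludes to: expanding $|w\chi+\kappa\sigma(\lambda)|^2$ via $W$-invariance of the Killing form and the identity $(w\chi|\sigma(\lambda))=\langle w\chi,\lambda\rangle$ is precisely the intended verification. No gaps.
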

\begin{proof}
This can be verified using proposition \ref{p:momentcoor} by direct calculation.
\end{proof}

\end{document}